\definecolor{GoetheBlue}{RGB}{0,97,143}
\def\thm@space@setup{%
  \thm@preskip=3pt \thm@postskip=3pt
}
\renewenvironment{proof}[1][\proofname]{%
  \par
  \pushQED{\qed}%
  \normalfont
  \topsep0pt        
  \parskip0pt       
  \trivlist
  \item[\hskip\labelsep\itshape #1\@addpunct{.}]%
}{%
  \popQED
  \endtrivlist
}
\newtheorem*{theorem*}{Theorem}
\newtheorem{maintheorem}{Theorem}[section]
\newtheorem{theorem}{Theorem}[section]
\newtheorem{lemma}[theorem]{Lemma}
\newtheorem{proposition}[theorem]{Proposition}
\newtheorem{corollary}[theorem]{Corollary} 
\theoremstyle{definition}
\newtheorem{definition}[theorem]{Definition}
\newtheorem{example}[theorem]{Example}
\newtheorem{conjecture}{Conjecture}
\newtheorem{remark}[theorem]{Remark}
\newtheoremstyle{myitemstyle}						
	{}			
	{}			
	{}			
	{}			
	{}			
	{.}			
	{ }			
	{}			
\theoremstyle{myitemstyle}
\newtheorem{myitemthm}{}
\newcommand{\R}{\mathbb{R}}
\newcommand{\T}{\mathbb{T}}
\newcommand{\N}{\mathbb{N}}
\renewcommand{\P}{\mathbb{P}}
\newcommand{\G}{\mathbb{G}}
\DeclareMathOperator{\supp}{supp}
\DeclareMathOperator{\stab}{stab}
\DeclareMathOperator{\trop}{trop}
\DeclareMathOperator{\Dr}{Dr}
\newcommand{\conv}{\mathrm{conv}}
\newcommand{\Trop}{\text{Trop}}
\author{Andreas Gross}
\address{Institut f\"ur Mathematik, Goethe--Universit\"at Frankfurt,
60325 Frankfurt am Main, Germany}
\email{gross@math.uni-frankfurt.de}
\author{Kevin K\"uhn}
\address{Technische Universit\"at Berlin,
10587 Berlin, Germany}
\email{kuehn@math.tu-berlin.de}
\author{Dante Luber}
\address{Universit\"at Paderborn,
33098 Paderborn, Germany \& Queen Mary University of London,
E14NS London, United Kingdom}
\email{dluber@math.uni-paderborn.de}
\title{Minuscule Coxeter Dressians}
\begin{document}

\maketitle
\begin{abstract}
In this extended abstract, we study special tropical prevarieties which we call Coxeter Dressians. They arise from equations capturing a generalization of valuated symmetric basis exchange for Coxeter matroids. In particular, we study subdivisions of the associated Coxeter matroid polytopes. We show that the subdivisions induced by points of the Coxeter Dressian consist of cells which are strong Coxeter matroidal. This generalizes well-known results in type $A$ to other Lie types. Finally, we implement explicit computations of Coxeter Dressians in \texttt{OSCAR}. 
\end{abstract}
\section{Introduction}

Tropicalization is a process that famously translates algebro-geometric objects into combinatorial ones. Importantly, it is sensitive to chosen coordinates and embeddings of given varieties. Systems of polynomial equations are ubiquitous throughout the disciplines of algebra, geometry, and adjacent fields. In this project, we are interested in tropicalizing so-called \emph{strong exchange equations}, which appear naturally in representation theory. The spaces they cut out are known as \emph{generalized flag varieties}.

One such instance is the Grassmann--Pl\"ucker equations. It is well-known, that these equations cut out the Grassmannian, which parametrizes subspaces of a given vector space. Its tropical counterpart, the Dressian, parametrizes tropical linear spaces, and is obtained by explicitly tropicalizing the equations. The study of tropical linear spaces, going back to \cite{speyer_tropical_linear_spaces}, has been proven to be one of the most fruitful endeavors in tropical geometry, with numerous connections to combinatorics, algebra, and biology.

When one studies tropical linear spaces, or valuated matroids, in the background of the theory one always finds the root system $A_n$. An example is the Gel'fand--Serganova theorem, or Speyer's characterization of valuated matroids \cite{speyer_tropical_linear_spaces} of valuations inducing matroidal subdivision of the hypersimplex. The goal of this paper is to go beyond the type $A_n$ case, and introduce valuated matroids as well as Dressians for different root systems. 

Coxeter matroids are generalizations of matroids which arise as collections of cosets in $W/P$, where $W$ is a reflection group of any Lie type, and $P$ is a standard parabolic subgroup. We study Coxeter matroids which possess the \emph{strong exchange property}, which captures the symmetric exchange property of usual matroids. We further restrict to a special subclass, those Coxeter matroids which are said to be \emph{minuscule}. Minuscule strong exchange Coxeter matroids are the most faithful incarnation of usual matroids outside Lie type $A$.

For us, the relevant tropical prevarieties are defined by the equations which recently appeared in \cite{Fink_et_al}. We call the prevarieties of interest \emph{Coxeter Dressians}.
Let $\T\coloneqq \R\cup \{\infty\}$ be the semifield of tropical numbers with $\min$-notation. For minuscule $W/P$, the Coxeter Dressian is denoted $\Dr(W,P) \subseteq \T^{W/P}$. For each $W/P$, these defining equations are obtained by tropicalizing polynomials which generate the ideal of the related generalized flag variety \cite{Fink_et_al}. When feasible, we compute and study concrete examples of these prevarieties. Every Coxeter matroid admits an encoding as a polytope. We study the tropical geometry associated with regular polyhedral subdivisions into cells which are again Coxeter matroidal. Our main theorem is as follows.

\begin{maintheorem}[Theorem \ref{thm:cells_strong_exchange}]\label{mainthm:strong_cells2}
Let $(W,P)$ be minuscule, and $\mu \in \Dr(W,P)$ be a point in the corresponding Coxeter Dressian. Then $\mu$ induces a subdivision of its matroid polytope into strong Coxeter matroid polytopes.
\end{maintheorem}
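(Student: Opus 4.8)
The plan is to view $\mu$ as a height function on the vertex set of the Coxeter matroid polytope $\Delta_\mu = \conv(\supp \mu)$, where $W/P$ is identified with the Weyl orbit of the minuscule weight and $\supp\mu = \{wP : \mu_{wP} < \infty\}$ records the underlying Coxeter matroid. The point $\mu$ induces the regular subdivision $\mathcal{D}_\mu$ of $\Delta_\mu$ obtained from the lower faces of the lifted point configuration (using the $\min$-convention). The whole statement then reduces, via the Gel'fand--Serganova characterization recalled in the introduction --- a subpolytope of $\conv(W/P)$ with vertices in $W/P$ is a Coxeter matroid polytope exactly when each of its edges is parallel to a root --- to the single geometric assertion that \emph{every edge of every cell of $\mathcal{D}_\mu$ points in a root direction}. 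Once this is established, each cell is a Coxeter matroid polytope, and I would read off the strong exchange property from the root-direction edges, which in the minuscule (multiplicity-free) setting realize the admissible symmetric exchanges directly.

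First I would dispose of the boundary edges: any edge of a cell lying on an edge of $\Delta_\mu$ is automatically a root direction, since $\Delta_\mu$ sits inside the minuscule weight polytope, whose edges all join $wP$ to $s_\alpha wP$ for a root $\alpha$. The content is therefore confined to the interior edges introduced by $\mathcal{D}_\mu$, and I would argue by contradiction: suppose some cell has an edge $e$ whose direction is not a root.

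The crux is a localization step. I would show that a non-root edge $e$ must occur as a non-root \emph{diagonal} of a sub-polytope $F \subseteq \Delta_\mu$ that is itself the minuscule weight polytope of a small-rank sub-datum --- the general-type analogue of the $\Gr(2,4)$-octahedron that carries the classical three-term Pl\"ucker relation in type $A$. On such an $F$ the defining equation of $\Dr(W,P)$ from \cite{Fink_et_al} supported on its vertices specializes to a basic tropical strong-exchange relation, whose content is precisely that a certain minimum is attained at least twice. Since $\mu \in \Dr(W,P)$, this minimum is indeed attained twice, and I would then check, over the finite list of these local models, that the double minimum forbids any diagonal of $F$ from appearing as an edge of $\mathcal{D}_\mu$ (in type $A$ this is exactly the statement that the three-term relation rules out the non-matroidal, ``coned-diagonal'' subdivision of the octahedron). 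This contradicts that $e$ is an edge of a cell, so all edges are root directions and the theorem follows.

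I expect the main obstacle to be this localization together with the classification of local models. Two things must be controlled: that an arbitrary non-root edge of a global cell always restricts to a non-root diagonal of one of the admissible sub-polytopes $F$, and that for each such $F$ the corresponding tropical equation of \cite{Fink_et_al} genuinely excludes that diagonal. Outside type $A$ one cannot rely on a single combinatorial model; instead one must enumerate the sub-data producing the relevant $F$ --- low-rank Levi restrictions, of which the $\Gr(2,4)$ octahedron is the type-$A$ prototype --- and here the minuscule hypothesis is essential: it keeps the weights multiplicity-free, makes $\conv(W/P)$ a well-understood minuscule polytope, and bounds the list of local models that can carry a defining equation.
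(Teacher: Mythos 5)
Your argument has a genuine gap at its final step: the passage from ``Coxeter matroidal'' to ``\emph{strong} Coxeter matroidal''. The Gel'fand--Serganova edge criterion you reduce everything to --- every edge of every cell parallel to a root --- characterizes Coxeter matroid polytopes, but the theorem asserts that the cells are strong Coxeter matroid polytopes, and these classes genuinely differ: strong exchange implies Coxeter matroid, but the converse fails \cite[4.2.4]{BorovikGelfandWhite:2003}. Strong exchange is a condition on \emph{all} pairs of bases $A,B\in\mathcal{M}$ (a reflection whose mirror separates $A$ from $B$ must carry both into $\mathcal{M}$), not only on edge-adjacent pairs, so it cannot be ``read off from the root-direction edges'' as you propose. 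This is not a technicality: in type $B_n$ --- one of the types covered by the theorem --- there exist minuscule Coxeter matroids that do \emph{not} satisfy strong exchange, so in that type your argument proves a strictly weaker conclusion. In the remaining types the equivalence of the two notions is itself a deep fact (for $E_7$ this paper establishes it only by exhaustive search, Theorem \ref{thm:minuscule coxeter matroids satisfy strong exchange}), so even where your route could be completed it would rest on far heavier input than the statement requires.

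It is worth noting that the shift you implicitly use to kill diagonals is exactly the engine of the paper's proof, but applied to whole cells rather than to edges. Given a cell $C$ of the subdivision, one chooses an affine function $\varphi$ with $\mu+\varphi=0$ on $C$ and $>0$ elsewhere; affine invariance of the equations (verified type by type, Propositions \ref{prop:B_n_affine}, \ref{prop:D_1_affine}, \ref{prop:E_7_affine_inv}) keeps $\mu+\varphi$ in the prevariety, and then for each equation either the minimum is $0$, forcing at least two monomials supported inside $C$, or it is positive, forcing every monomial to meet a vertex outside $C$; in both cases the $\{0,\infty\}$-indicator vector $\mu_C$ satisfies the equation (Theorem \ref{thm:cells_strong_exchange}). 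The conclusion that $C$ is a \emph{strong} Coxeter matroid polytope is then precisely the characterization of strong Coxeter matroids by their $\{0,\infty\}$-vectors in \cite[Theorem 1.1]{Fink_et_al}; that citation is what replaces your edge analysis and carries all of the matroid theory. Your localization step, by contrast, only ever recovers the pairwise part of that characterization --- and even it needs repair as stated: in the even-symmetric-difference cases of types $B_n$ and $D_n$ the equation indexed by the pair $(I,J)$ does not contain the monomial $x_I\odot x_J$ at all, so it cannot forbid the diagonal joining $I$ and $J$; one must instead invoke the equation $f^{B}_{I\triangle i,\,J\triangle i}$ for some $i\in I\triangle J$, which does contain that monomial.
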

Theorem \ref{mainthm:strong_cells2} holds for all Lie types admitting minuscule matroids. That is, for types $A,B,C,D,E_6,E_7$. We show that, in previously unknown cases for types $D$ and $E_6$, the converse holds. We give a counter example to the converse for type $B$. For $E_7$, we show that a proper subset of the strong exchange equations is sufficient to detect strong matroidal subdivisions.

Simplifying systems equations can be crucial for explicit computations. In the $A_n$ case, it is well-known that the $3$-term Grassmann--Pl\"ucker relations imply the other relations \cite{DressWenzel_valuatedmatroids,speyer_tropical_linear_spaces}. A similar statement is known in the type $D_n$ \cite{rincon_isotropical}. We prove the type $B_n$ analog.

\begin{maintheorem}[Theorem \ref{thm:b_4_equations}]
Let $\mu \in \T^{2^{[n]}}$, such that $\supp(\mu)$ is a strong Coxeter matroid of type $B_n$. Then $\mu$ belongs to the Coxeter Dressian if and only if $\mu$ satisfies the \emph{$4$-term strong exchange equations}.
\end{maintheorem}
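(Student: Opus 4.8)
The plan is to follow the same strategy that works in types $A_n$ and $D_n$: show that the full strong exchange equations are generated, over the tropical semifield, by the distinguished shorter relations. Concretely, the goal is to prove that if $\mu \in \T^{2^{[n]}}$ has support a strong Coxeter matroid of type $B_n$ and satisfies the $4$-term strong exchange equations, then $\mu$ satisfies all the strong exchange equations defining $\Dr(W,P)$. The reverse implication is immediate, since the $4$-term equations are among the defining equations, so the content is entirely in deriving the longer relations from the short ones.

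First I would fix notation for the type $B_n$ minuscule case, recording explicitly what the general strong exchange equation looks like on coordinates indexed by subsets $S \subseteq [n]$ (the minuscule weights for $B_n$/$C_n$), and isolating the special family of $4$-term equations. The key structural input is the assumption that $\supp(\mu)$ is a strong Coxeter matroid of type $B_n$; I would use this to control which coordinates are finite and to guarantee that the relevant tropical minima in each long equation are actually attained on the support, so that no term is vacuously $\infty$. Tropically, ``satisfying an equation'' means the minimum in the associated tropical polynomial is attained at least twice; thus the real task is a combinatorial bookkeeping argument showing that the doubly-attained-minimum condition for a long equation can be deduced from those for a controlled collection of $4$-term equations that share enough common coordinates.

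The main work, and the step I expect to be the principal obstacle, is the inductive or telescoping argument that reduces a general strong exchange equation to the $4$-term ones. In type $A_n$ this is the classical three-term reduction; in type $D_n$ Rincón's isotropical argument plays the analogous role. For $B_n$ the subtlety is the presence of the distinguished element (the short root direction) and the resulting asymmetry between subsets and their complements, which means the exchange relations do not all have the uniform shape they have in type $A$. I would proceed by induction on the ``distance'' between the two Coxeter matroid basis elements appearing in a given equation, at each step splicing in an intermediate basis so that the long relation factors through two relations of strictly smaller distance, and then invoking the $4$-term equations as the base case. The delicate points are verifying that the intermediate bases lie in the support (here the strong Coxeter matroid hypothesis is essential) and checking that the tropical additivity needed to combine the intermediate min-attained conditions does not introduce a cancellation that breaks the doubly-attained-minimum property.

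Finally I would assemble these pieces: having shown each defining equation's tropical minimum is attained at least twice, conclude that $\mu \in \Dr(W,P)$. I expect the argument to mirror the structure of the type $D_n$ proof closely, with the genuinely new content concentrated in handling the type-$B_n$-specific relations involving the distinguished coordinate; a careful case analysis there, organized by how many of the four terms involve that coordinate, should suffice. Throughout, I would lean on the earlier established fact (our main Theorem~\ref{mainthm:strong_cells2}) that points of the Coxeter Dressian induce strong Coxeter matroidal subdivisions, to cross-check that the reduced equation system cuts out the same prevariety.
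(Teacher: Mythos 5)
Your treatment of the easy direction and your identification of where the support hypothesis must enter are fine, but the heart of your plan --- the induction that ``splices in an intermediate basis'' so that a long strong exchange relation factors through relations of smaller distance --- is exactly the part you never carry out, and it is where the entire difficulty of the theorem lives. Tropical identities do not telescope: knowing that two shorter tropical polynomials each attain their minimum twice does not, by any general mechanism, imply the same for a longer polynomial assembled from them; one needs fine control over \emph{which} terms attain the minima, and this is the substance of the hard results in types $A$ and $D$ (Rinc\'on's theorem, which you invoke only as inspiration, is itself a nontrivial argument, not a formal induction). As written, your proposal labels these issues ``delicate points'' and ``the principal obstacle'' but supplies no argument for them, so it is a plan for a proof rather than a proof. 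The closing suggestion to cross-check against Theorem~\ref{mainthm:strong_cells2} also cannot help: that theorem gives consequences of Dressian membership for subdivisions, not a criterion for membership.

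The missing idea, which the paper uses to bypass all of this, is Proposition~\ref{prop:B_n_vs_D_n+1}: the parity-completion embedding $A \mapsto \overline{A}$ (adjoin $n+1$ to the odd sets) induces a linear isomorphism $\Dr(B_n,P_n) \cong \Dr(D_{n+1},P_{n+1})$ under which the type $B_n$ strong exchange equations correspond bijectively to the type $D_{n+1}$ ones, with $|I \triangle J| \geq 3$ in type $B_n$ matching $|I' \triangle J'| \geq 4$ in type $D_{n+1}$; in particular, four-term equations match four-term equations. Your hypothesis that $\supp(\mu)$ is a strong Coxeter matroid of type $B_n$ transports to the corresponding hypothesis on $\supp(\overline{\mu})$, and then \cite[Theorem 5.1]{rincon_isotropical} applied to $\overline{\mu}$ yields the hard direction as a black-box corollary: $\overline{\mu} \in \Dr(D_{n+1},P_{n+1})$, hence $\mu \in \Dr(B_n,P_n)$. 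No new induction in type $B_n$ is needed; rather than \emph{mirroring} the structure of the type $D$ proof, the short and correct route is to \emph{reduce to its statement}. (The paper additionally proves a refinement via Lemma~\ref{lem:four_cube_equations}: when all entries of $\mu$ are finite, the equations with $|I \triangle J| = 3$ already suffice.)
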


As the strong exchange equations reflect the structure of the associated Coxeter matroid polytopes, it is natural to study Coxeter Dressians and valuations via direct computation. Using \texttt{OSCAR} \cite{OSCAR}, we implement code to compute the polytopes and equations, test if a height function is contained in the Coxeter Dressian, and determine if a subdivision is Coxeter matroidal. We directly compute special cases of Coxeter Dressians in type $B_n$ for $n\in\{3,4\}$, and type $D_n$ for $n\in\{5,6\}$. Furthermore, we compare these prevarieties to the secondary fans of the relevant polytopes, and compute the $f$-vectors. Finally, we perform an exhaustive computation to verify that strong exchange holds for all  $E_7$ minuscule Coxeter matroids. Our computations can be found at the following \texttt{github} repository:

\begin{center}
\url{https://github.com/danteluber/strong_coxeter_dressians}    
\end{center}

For this extended abstract, we assume some knowledge of regular subdivisions and secondary fans. The standard reference is \cite{DeLoeraRambauSantos}. For all aspects of tropical geometry, we point to \cite{Joswig_ETC,MaclaganSturmfels}. Our reference for (Coxeter) matroid theory is \cite{BorovikGelfandWhite:2003}. 

\subsection*{Acknowledgments}
The authors would like to thank Kieran Calvert, Aram Derenjian, Alex Fink, Michael Joswig, Igor Makhlin, Lars Kastner, Ben Smith, and Martin Ulirsch for helpful discussions at various stages of the project. 
Andreas Gross has received funding from the Deutsche Forschungsgemeinschaft (DFG, German Research Foundation) TRR 326 \emph{Geometry and Arithmetic of Uniformized Structures}, project number 444845124 and from the Marie-Sk\l{}odowska-Curie-Stipendium Hessen (as part of the HESSEN HORIZON initiative).
Kevin K\"uhn received support by Deutsche Forschungsgemeinschaft (DFG) through "Symbolic Tools in Mathematics and their Application" (TRR 195, project ID 286237555). 
Dante Luber is supported by the DFG Sachbeihilfe "Rethinking tropical linear algebra: Buildings, bimatroids, and applications", (SPP 2458, project ID 539867663), within Combinatorial Synergies, and also received support from the Engineering and Physical Sciences Research Council (grant number EP/X001229/1).

\section{Coxeter Matroids and their polytopes}\label{sec:cox_matroids_polytopes}
We defer to Chapter 5 of \cite{BorovikGelfandWhite:2003} for the formal definitions concerning (classifications of) root systems, reflection groups, and general cryptomorphisms for Coxeter matroids. However, we remind the reader that \emph{root system} are special collections of vectors corresponding to mirrors over hyperplanes in $\R^n$ with deep connections to representation theory. Any root system $\Phi$ gives rise to a \emph{Weyl group} $W$, acting on Euclidean space as a finite reflection group. Any choice of \emph{simple roots} $\Phi^{s}=\{s_1,\dots,s_n\}$ then yields generators of $W$, i.e., $W = \langle s_1,\dots,s_n\rangle$. When we refer to a Weyl group, we implicitly assume the datum of its root system is known.
Finally, every irreducible root system is uniquely classified by type $A_n, B_n, C_n,\dots$ (etc). So when we say (for example) $W=W(B_n)$, we mean that $W$ is a reflection group associated with the type $B_n$ root system, and that $W$ is generated by $n$ simple reflections within $\Phi^{s}$.

Now, let $W=\langle s_1,\dots,s_n \mid s_i\in\Phi^{s}\rangle$, and consider $P=\langle s_{i_1},\dots,s_{i_k}\rangle$. Let $v\in V\setminus \{0\}$ be such that $\stab_W(v)=P$. That is, $\sigma\cdot v=v$ if and only if $\sigma\in P$. Then we identify every element of the orbit $W\cdot v$ with a unique coset in $W/P$. For any $\mathcal{M}\subseteq W/P$, we generate the polytope 
$$\Delta(\mathcal{M})=\conv(\{\sigma\cdot v \mid \sigma P\in\mathcal{M}\}).$$
The vertices of $\Delta(W/P)$ are identified with the cosets, and every edge of $\Delta(W/P)$ is parallel to an element of $\Phi$. More generally, $\mathcal{M}\subseteq W/P$ is a \emph{Coxeter matroid} if every edge of $\Delta(\mathcal{M})$ is parallel to an element of $\Phi$. Throughout this document, we may interchangeably think of an element $A\in\mathcal{M}$ as a coset or a vertex of the associated polytope. In line with usual matroid theory, when $\mathcal{M}$ is a Coxeter matroid, it's elements will be referred to as \emph{bases}.

\subsection{Minuscule Coxeter Matroids and the Strong Exchange Property}\label{sec:minuscule_strong_ex}
Let $W$ be a Weyl group associated to the root system $\Phi$, $P\leq W$ parabolic, and $\mathcal{M}\subseteq W/P$. Then $\mathcal{M}$ possesses the \emph{strong exchange property} if for all $A,B\in\mathcal{M}$, there exists a reflection $s\in\Phi$ such that the mirror defined by $s$ separates $A$ and $B$, and $sA,sB\in\mathcal{M}$. Observe that this definition mimics the symmetric exchange property for usual matroids. Any collection of cosets with the strong exchange property is a Coxeter matroid \cite[Theorem 6.1.2]{BorovikGelfandWhite:2003}, but the converse fails \cite[4.2.4]{BorovikGelfandWhite:2003}. Finally, the \emph{Bruhat order} is a partial ordering on $W$ which extends to $W/P$. When the resulting poset is a lattice, then $P$, is said to be \emph{minuscule} \cite{proctor_lattices}. In that case, $P$ is always maximal. That is, $P=P_r \coloneqq \langle s_1,\dots \hat{s}_r, \dots, s_n \rangle$. 

\begin{remark}
Minuscule parabolic subgroups may also be defined in terms of fundamental weights coming from representation theory. The quotient $\G/\P$ of a reductive group by a parabolic subgroup is naturally a projective variety. For minuscule representations, the ambient projective space of this variety naturally comes with a basis indexed by the cosets of the Weyl group. This allows one to concretely write equations in explicit coordinates that cut out $\G/\P$ \cite{seshadri_geometry}.
\end{remark}

\subsection{Set-theoretic Identification of Cosets}\label{sec:coset_identification}
In the types $A_{n-1},B_n,C_n,$ and $D_n$ with minuscule parabolic $P$ one can identify cosets in $W/P$ with subsets of $[n]$. We may identify the Weyl group $W(A_{n-1})$ with the symmetric group $S_{n}$. The generating set is that of neighbor transpositions $s_i=(i,i+1)$, and all $P_r=\langle s_1,\dots,\hat{s}_r, \dots, s_n\rangle= \stab_{S_n}([r])$ are minuscule. We then identify any coset $wP_r$ with $w([r])\in 2^{[n]}$ for $w\in S_n$. Moreover, under this identification, the polytope $\Delta(A_n/P_r)$ is identified with the $(r,n)$-hypersimplex $\Delta(r,n)=\conv\{e_A \mid A \in \binom{[n]}{r}\}$.

The Weyl group of $B_n$ and $C_n$ is the hyperoctahedral group, i.e., the group of permutations of $[n]\sqcup -[n]$ commuting with the involution $i \mapsto -i$. As generators we have $s_i = (i, i+1)(-i,-(i+1))$ for $i=1,\dots,n-1$, and $s_n=(n, -n)$. The only minuscule parabolic subgroup of $B_n$ is $P_n=\langle s_1,\dots,s_{n-1} \rangle = \stab_{W(B_n)}(-[n])$. We now identify cosets in $W(B_n)/P$ with subsets of $[n]$ via $wP \mapsto w(-[n]) \cap [n]$. Hence, the polytope $\Delta(B_n/P_n)= \conv\{e_A - (-\frac{1}{2},\dots,-\frac{1}{2})\mid A \in 2^{[n]}\}$ is exactly the shifted unit $n$-cube.

\subsection{Strong Exchange Equations}\label{sec:strong_exchange}
Let $(W,P)$ be minuscule, and $\mathcal{M}\subseteq W/P$. Let $\mu_{\mathcal{M}}\in\{0,\infty\}^{W/P}$ such that $\mu_{\mathcal{M}}(A)=0$ if $A\in\mathcal{M}$ and $\mu_{\mathcal{M}}(A)=\infty$ otherwise. Recently, in \cite{Fink_et_al} the authors introduced collections of tropical equations $\mathcal{F}$ such that the minimum of $f\in\mathcal{F}$ is attained at least twice when evaluated at $\mu_{\mathcal{M}}$ if and only if $\mathcal{M}$ is a strong Coxeter matroid. These equations are square-free, quadratic, and  reflect the polyhedral structure of $\Delta(W/P)$, where the (degree-2)-monomials correspond to pairs of antipodes on a face. These equations are similar in spirit to those studied in \cite{bakerbowler2016matroids,bakerbowlerpartial}, however the latter works only cover usual matroids, and consider different hyperfields to detect matroidal properties.

\section{The Coxeter Dressian}\label{sec:Coxeter_dressian}
In this section, we will introduce valuated Coxeter matroids as well as the spaces parameterizing them. Recall that $\T$ denotes the tropical $\min$-semifield with operations denoted by $\oplus$ and $\odot$.

\begin{definition}\label{def:coxeter_Dressian}
Let $W$ be a Weyl group and $P$ a minuscule parabolic subgroup. Let $\mathcal{F}$ be the collection of associated strong exchange equations (see Equations
\ref{eq:B_n_strong},
\ref{eq:D_1_strong},
\ref{eq:D_n_strong},
\ref{eq:E_6_strong},
\ref{eq:E_7_strong}). The \emph{(strong) Coxeter Dressian} of $W/P$ is the tropical prevariety 
$$\text{Dr}(W,P)=\bigcap_{f\in\mathcal{F}}V^{\trop}(f) \subseteq \T^{W/P}.$$
We will call points $\mu \in \Dr(W,P)$ \emph{valuated Coxeter matroids}. Usually, to declutter notation, we will write the Coxeter Dressian, for example, as $\Dr(A_n,P_r)$. 
\end{definition}

\begin{definition}
Let $E\subset \R^n$ be finite. We say that a set 
$\mathcal{F}\subseteq \T[x_e| e\in E]$ of tropical polynomials with trivial coefficients is \emph{affinely invariant} if for any function $\mu: E\to \T$ 
and for for all affine linear functions $\varphi: \R^n \to \R$ we have
\[\mu \in V^{\trop}(\mathcal{F}) \text{ if and only if } \mu+\varphi|_E \in V^{\trop}(\mathcal{F})\, .\]
\end{definition}

\begin{theorem}\label{thm:cells_strong_exchange}
Let $E\subset \R^n$ be finite, and $\mu\colon E \to \T$, with induced subdivision $S(\mu)$ of $\conv(\supp(\mu))$. Let $\mathcal{F}\subseteq \T[x_e| e\in E]$ be an affinely invariant set of tropical polynomials with trivial coefficients. If $\mu\in V^{\trop}(\mathcal{F}) \subseteq \T^E$, then for any cell $C \in S(\mu)$, the function
\begin{align*}
\mu_C\colon E \to \T, \quad e  \mapsto \begin{cases}
0 \ \ \ \text{ if }e \in C \\
\infty \, \text{ else}
\end{cases}
\end{align*}
satisfies all tropical equations in $\mathcal{F}$. In other words, $\mu_C \in V^{\trop}(\mathcal{F)}$.
\end{theorem}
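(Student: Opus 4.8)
The plan is to reduce the statement to a single tropical polynomial $f\in\mathcal{F}$ and, exploiting affine invariance, to tilt the height function so that the chosen cell $C$ becomes the zero set of a nonnegative function; the equation at $\mu_C$ then drops out of a direct evaluation, with no limiting argument needed.

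First I would fix $f=\bigoplus_{\alpha}x^\alpha\in\mathcal{F}$ and recall the standard description of a regular subdivision: since $S(\mu)$ is regular, the cell $C$ is a lower face of the lifted configuration $\{(e,\mu(e))\mid e\in\supp(\mu)\}$, so there is a vector $w\in\R^n$ (any vector in the relative interior of the normal cone of $C$) and a constant $c\in\R$ with $\langle w,e\rangle+\mu(e)\ge c$ for all $e\in\supp(\mu)$, with equality exactly when $e$ lies in $C$. I would then set $\varphi(x)=\langle w,x\rangle-c$ and $\tilde\mu:=\mu+\varphi|_E$. By construction $\tilde\mu(e)=0$ for $e\in C\cap\supp(\mu)$, $\tilde\mu(e)>0$ for $e\in\supp(\mu)\setminus C$, and $\tilde\mu(e)=\infty$ for $e\notin\supp(\mu)$; in particular $\tilde\mu\ge 0$ everywhere and its zero set is exactly $C\cap\supp(\mu)=\{e\mid\mu_C(e)=0\}$. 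Because $\varphi$ is affine and $\mathcal{F}$ is affinely invariant, the hypothesis $\mu\in V^{\trop}(\mathcal{F})$ yields $\tilde\mu\in V^{\trop}(\mathcal{F})$, and hence $\tilde\mu\in V^{\trop}(f)$.

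Next I would translate the goal $\mu_C\in V^{\trop}(f)$ into a combinatorial condition. As the coefficients are trivial, $\langle\alpha,\mu_C\rangle=\sum_e\alpha_e\mu_C(e)$ equals $0$ if every variable occurring in $\alpha$ indexes a point of $C$ and equals $\infty$ otherwise. Hence the minimum $f(\mu_C)$ is attained exactly once precisely when a single monomial of $f$ has all its variables in $C$; in every other case (two or more such monomials, so the value $0$ is attained at least twice, or none, so $f$ is identically $\infty$ on $\mu_C$) the minimum is attained at least twice and $\mu_C\in V^{\trop}(f)$. It therefore remains to exclude the ``unique monomial'' situation. Suppose a single monomial $\alpha^*$ has all its variables in $C$. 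Evaluating on $\tilde\mu$, since $\tilde\mu$ vanishes on $C$ we get $\langle\alpha^*,\tilde\mu\rangle=0$, while for every other monomial $\alpha$ some variable $e$ of $\alpha$ lies outside $C$, so $\tilde\mu(e)>0$ and, as $\tilde\mu\ge 0$, $\langle\alpha,\tilde\mu\rangle>0$. Thus the minimum of $f$ at $\tilde\mu$ equals $0$ and is attained only by $\alpha^*$, contradicting $\tilde\mu\in V^{\trop}(f)$. This rules out the bad case, proving $\mu_C\in V^{\trop}(f)$ for every $f\in\mathcal{F}$, that is, $\mu_C\in V^{\trop}(\mathcal{F})$.

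The conceptual heart is the affine tilt, which converts a statement about a cell of the subdivision into one about the zero locus of a nonnegative height function; once this is in place the verification is a one-line evaluation. The points requiring care are bookkeeping rather than depth: first, fixing the meaning of ``$e\in C$'' in the definition of $\mu_C$ so that it coincides with the vanishing set of $\tilde\mu$, i.e.\ with the support points lying on the exposed lower face. This is automatic in the Coxeter setting, where the monomial variables index vertices of the ambient polytope $\Delta(W/P)$, so that a configuration point lies in $C$ if and only if it is a vertex of $C$, hence a support point on the face. Second, one must fix the convention for $V^{\trop}(f)$ at the value $\infty$, which is only invoked in the degenerate case where no monomial is supported in $C$; I would absorb this (together with the harmless assumption that each $f$ has at least two monomials, as the strong exchange equations do) into the definition of the tropical hypersurface.
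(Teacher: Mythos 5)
Your proposal is correct and follows essentially the same route as the paper's proof: use affine invariance to tilt $\mu$ to a nonnegative $\tilde\mu$ vanishing exactly on $C$, then compare which monomials of each $f$ are supported on $C$. The only differences are cosmetic — you argue by contradiction (excluding the unique-minimizing-monomial case) where the paper does a direct two-case analysis on $f(\tilde\mu)=0$ versus $f(\tilde\mu)>0$, and you spell out the construction of the tilting function $\varphi$ from the lower-face description, which the paper leaves implicit.
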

\begin{proof}
Consider a cell $C$ of $S(\mu)$. Let $\tilde{\mu}= \mu+\varphi$ for some affine linear function $\varphi$ such that $\tilde{\mu}|_C=0$ and $\tilde{\mu}|_{S\setminus C} >0$. 
Since $\mathcal{F}$ is affinely invariant, $\tilde{\mu}\in V^{\trop}(\mathcal{F})$. Since $\mu,\tilde{\mu}$ only differ by an affine linear function, their induced subdivisions agree.
Let $f \in \mathcal{F}$, then $f = \bigoplus_{\alpha}x^{\odot\alpha}$ for finitely many $\alpha \in \N^E$. Since this minimum is attained at least twice, when plugging in $\tilde{\mu}$, there are two cases:
\begin{itemize}
\item If the evaluation $f(\tilde{\mu})=0$, then there are at least two monomials of $f$ supported on $C$. In that case, surely $f(\mu_C)$ = 0, and the minimum is attained at these monomials. Thus, $\mu_C \in V^{\trop}(f)$.
\item If $f(\tilde{\mu})>0$, then all monomials of $f$ contain a variable in $E\setminus C$. It follows, that $f(\mu_C)= \infty$, and hence trivially $\mu_C$ tropically satisfies $f$.
\end{itemize}
\vspace{-5mm}
\end{proof}
\vspace{3pt}
\begin{theorem}\label{thm:strong_exchange_all_types}
For all minuscule types $A_n,B_n,C_n,D_n,E_6,E_7$, the set of strong exchange equations are affinely invariant. Thus, by Theorem \ref{thm:cells_strong_exchange}, all height functions $V(\Delta(W/P))\to \T$ satisfying the respective equations induce strong matroidal subdivisions.
\end{theorem}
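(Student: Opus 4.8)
The plan is to reduce the theorem to a single combinatorial feature of the equations. The second sentence is immediate from Theorem~\ref{thm:cells_strong_exchange} once affine invariance is known, so the entire content is the first sentence. Since $V^{\trop}(\mathcal{F})=\bigcap_{f\in\mathcal{F}}V^{\trop}(f)$, it suffices to prove that each individual tropical hypersurface $V^{\trop}(f)$ is preserved under adding an affine linear function, that is, $\mu\in V^{\trop}(f)$ if and only if $\mu+\varphi|_E\in V^{\trop}(f)$ for every affine $\varphi$ and every strong exchange equation $f$.

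First I would record an abstract criterion. Writing $f=\bigoplus_{\alpha}x^{\odot\alpha}$ with trivial coefficients, evaluation reads $f(\mu)=\min_{\alpha}\langle\alpha,\mu\rangle$, and $\mu\in V^{\trop}(f)$ exactly when this minimum is attained at least twice. For an affine function $\varphi(x)=\langle c,x\rangle+d$ one computes, for each exponent $\alpha\in\N^{E}$,
\begin{align*}
\langle\alpha,\,\mu+\varphi|_E\rangle
=\langle\alpha,\mu\rangle
+\Bigl\langle c,\,\textstyle\sum_{e\in E}\alpha_e\,e\Bigr\rangle
+d\,\textstyle\sum_{e\in E}\alpha_e .
\end{align*}
If the correction term $\bigl\langle c,\sum_e\alpha_e e\bigr\rangle+d\sum_e\alpha_e$ is the same for every monomial of $f$, then every monomial value is shifted by one common constant; the set of minimizing monomials is therefore unchanged, and in particular the minimum is attained at least twice before the shift if and only if afterwards. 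Thus affine invariance of $\mathcal{F}$ follows from the purely combinatorial claim that, within each equation $f$, all monomials have the same total degree and the same weighted vertex-sum $\sum_{e}\alpha_e\,e$.

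It remains to extract this claim from the structure of the strong exchange equations recalled in Section~\ref{sec:strong_exchange}. Each $f$ is square-free and quadratic, so $\sum_e\alpha_e=2$ for every monomial and the contribution $2d$ is automatically uniform. Moreover, the degree-two monomials of a fixed $f$ correspond to pairs of antipodes on a single face $F$ of $\Delta(W/P)$. A pair of antipodal vertices $\{e,e'\}$ of $F$ is by definition symmetric about the center $c_F$ of $F$, so $e+e'=2c_F$ with one and the same $c_F$ for every antipodal pair of $F$. Consequently $\sum_e\alpha_e\,e=e+e'=2c_F$ is constant across the monomials of $f$, which is exactly the condition isolated in the previous paragraph.

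The only genuine work, and the step I expect to be the main obstacle, is to confirm this ``common face, common center'' description directly against the explicit families (Equations~\ref{eq:B_n_strong}, \ref{eq:D_1_strong}, \ref{eq:D_n_strong}, \ref{eq:E_6_strong}, \ref{eq:E_7_strong}), type by type: for each equation one must check that all of its monomial pairs $\{e,e'\}$ really are antipodal on one common face, rather than drawn from faces with differing centers. Because the monomials are built as the antipodal diagonals of a single face, this is a finite inspection rather than a computation, but it must be carried out for all six minuscule families. Once affine invariance holds in every type, Theorem~\ref{thm:cells_strong_exchange} applies with $E=V(\Delta(W/P))$: any height function $V(\Delta(W/P))\to\T$ satisfying the equations induces a subdivision each of whose cells $C$ has indicator $\mu_C\in V^{\trop}(\mathcal{F})$, i.e.\ is strong Coxeter matroidal, giving a strong matroidal subdivision.
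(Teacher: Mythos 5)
Your proposal is correct, and at its core it runs on the same computation as the paper, but it packages that computation differently. The paper proves affine invariance case by case (Propositions \ref{prop:B_n_affine}, \ref{prop:D_1_affine}, \ref{prop:E_7_affine_inv}, with the remaining types omitted as ``similar''), each time by evaluating the monomials of a concrete equation at $\mu+\varphi$ and observing that the correction term does not depend on the monomial. You instead isolate this observation as an abstract criterion: since every strong exchange equation is quadratic with trivial coefficients, affine invariance of $f$ follows once all monomials of $f$ share the same weighted vertex-sum $\sum_e \alpha_e\, e$; and this holds uniformly because the monomials of each equation are exactly the antipodal vertex pairs of one centrally symmetric face $F$, so every pair sums to $2c_F$. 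This buys a single argument covering all six types at once, at the price of verifying, type by type, that the explicit families (\ref{eq:B_n_strong}), (\ref{eq:D_1_strong}), (\ref{eq:D_n_strong}), (\ref{eq:E_6_strong}), (\ref{eq:E_7_strong}) really have this ``antipodes on one common face'' shape --- e.g.\ in type $B_n$ the identity $e_{I\triangle i}+e_{J\triangle i}=e_I+e_J$ for all $i\in I\triangle J$ --- which is precisely the identity the paper's propositions compute, so the per-type workload is the same, merely relocated into a structural inspection. One small gloss to repair: your closing claim that $\mu_C\in V^{\trop}(\mathcal{F})$ ``i.e.\ is strong Coxeter matroidal'' is not a definition but the characterization of \cite[Theorem 1.1]{Fink_et_al} recalled in Section \ref{sec:strong_exchange}; the paper's proof invokes that theorem explicitly for this last step, and your write-up should do the same.
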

\begin{proof}
We prove affine invariance case by case in Section \ref{sec:types}. More specifically, we deal with cases where valuations are not already well studied. We do so via Propositions \ref{prop:B_n_affine} (type $(B_n,P_n)$), \ref{prop:D_1_affine} (type $(D_n,P_1)$), \ref{prop:E_7_affine_inv} (type $(E_7,P_1)$). Proofs for the other cases can be done in a similar fashion, but are omitted here. From affine invariance, the indicator vectors of the cells satisfy respective strong exchange equations, hence by \cite[Theorem 1.1]{Fink_et_al}, the cells are strong matroid polytopes. 
\end{proof}

\section{The Minuscule Types}\label{sec:types}
In this section we will explicitly look at tropical strong exchange equations individually for each minuscule Lie type. These equations were systematically analyzed in \cite{Fink_et_al} over the Boolean semifield. 

\subsection{Type $A_n$}
As mentioned in the introduction, valuations of type $A$ Coxeter matroids have been extensively studied even outside the minuscule case \cite{speyer_tropical_linear_spaces,BEZ,JLLO}. For the equations corresponding to usual matroids, we defer to \cite[Chapter 4]{MaclaganSturmfels}. The type $A$ strong exchange equations are exactly the Grassmann--Pl\"ucker equations. They are affinely invariant, but we will omit a proof in this abstract.

\subsection{Type $B_n$}
Recall that in type $B_n$, the Weyl group is the hyperoctahedral group, and the only minuscule parabolic subgroup is $P_n = \langle s_1,\dots,s_{n-1} \rangle= \stab_{B_n}(-[n])$. We identify cosets $wP_n \in B_n/P_n$ with subsets of ${[n]}$ via $wP_n \mapsto w(-[n]) \cap [n]$. Fixing $(B_n,P_n)$, the \emph{strong exchange equations} are 
\begin{align}\label{eq:B_n_strong}
f_{I,J}^{B} \coloneqq \begin{cases}
\bigoplus_{i \in I\triangle J} 
x_{I \triangle i} \odot x_{J \triangle i} \hspace{19.7mm} \text{ if } |I \triangle J | \equiv 0 \mod 2 \\
x_I \odot x_J \oplus \bigoplus_{i \in I\triangle J} 
x_{I \triangle i} \odot x_{J \triangle i} \quad \text{ if } |I \triangle J | \equiv 1 \mod 2 ,\end{cases}
\end{align}
for all $I,J\subseteq [n]$ with $|I \triangle J |\geq 3$. 
They are tropicalizations of quadratic algebraic equations cutting out the type $B_n$ minuscule flag variety.

\begin{proposition}\label{prop:B_n_affine}
The type $B_n$ strong exchange equations \ref{eq:B_n_strong} are affinely invariant.
\end{proposition}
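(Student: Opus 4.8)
The plan is to show that adding $\varphi|_E$ to $\mu$ shifts \emph{every} monomial of a given equation $f^{B}_{I,J}$ by one and the same constant. Once this is established, the set of monomials realizing the tropical minimum is unchanged, so the minimum is attained at least twice for $\mu$ if and only if it is for $\mu+\varphi|_E$; this is exactly the content of affine invariance.

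First I would fix coordinates. Identify each subset $A\subseteq[n]$ with the corresponding vertex $v_A = e_A + \tfrac12 \mathbf{1}$ of the cube $\Delta(B_n/P_n)$, where $\mathbf{1}=(1,\dots,1)$, and write an affine linear function as $\varphi(x)=\langle a, x\rangle + b$ with $a\in\R^n$, $b\in\R$. Each monomial of $f^{B}_{I,J}$ is a product $x_A\odot x_B$ of exactly two variables, so in $\min$-plus arithmetic it evaluates at $\mu+\varphi|_E$ to $\mu(A)+\mu(B)+\bigl(\varphi(v_A)+\varphi(v_B)\bigr)$. Since $\varphi(v_A)+\varphi(v_B)=\langle a,\, v_A+v_B\rangle + 2b$, the additive shift applied to a monomial depends only on the vector sum $v_A+v_B$. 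Hence it suffices to verify that $v_A+v_B$ is the same for every monomial of $f^{B}_{I,J}$.

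The key computation is the evaluation of $v_{I\triangle i}+v_{J\triangle i}$ for $i\in I\triangle J$. Writing $e_{A\triangle i}=e_A+\varepsilon_A(i)\,e_i$ with $\varepsilon_A(i)=-1$ if $i\in A$ and $+1$ otherwise, one obtains $v_{I\triangle i}+v_{J\triangle i}=e_I+e_J+\mathbf{1}+\bigl(\varepsilon_I(i)+\varepsilon_J(i)\bigr)e_i$. The crux is that $i\in I\triangle J$ means exactly one of $I,J$ contains $i$, so $\varepsilon_I(i)+\varepsilon_J(i)=0$ and the $e_i$ term vanishes; thus $v_{I\triangle i}+v_{J\triangle i}=v_I+v_J$ for every such $i$. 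In the odd case the extra monomial $x_I\odot x_J$ contributes the vector sum $v_I+v_J$ as well, so all monomials share the common vector sum $v_I+v_J$.

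Putting this together, every monomial of $f^{B}_{I,J}$ is shifted by the common constant $\langle a,\, v_I+v_J\rangle + 2b$ when passing from $\mu$ to $\mu+\varphi|_E$, which completes the argument. I do not expect a genuine obstacle: the entire statement reduces to the cancellation $\varepsilon_I(i)+\varepsilon_J(i)=0$, which is precisely why the equations are indexed by $i\in I\triangle J$. The only point requiring a little care is bookkeeping the constant term $b$, which enters with weight equal to the (uniform) degree $2$ of the monomials; this uniformity of degree is what makes the constant part of $\varphi$ harmless.
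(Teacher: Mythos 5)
Your proof is correct and takes essentially the same approach as the paper: both arguments show that, because $v_{I\triangle i}+v_{J\triangle i}=v_I+v_J$ for every $i\in I\triangle J$ (and likewise for the extra monomial $x_I\odot x_J$ in the odd case), an affine linear $\varphi$ shifts every monomial of $f^{B}_{I,J}$ by one and the same constant, so the set of minimizing monomials is unchanged. Your explicit $\varepsilon$-bookkeeping is, if anything, a slightly more careful writing of the paper's one-line computation.
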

\begin{proof}
Let $\mu: 2^{[n]}\to \T$ be arbitrary, and $\varphi: \R^{n} \to \R$ be affine linear. We consider the variables as vertices of the $n$-cube $Q_n=[-\frac{1}{2},\frac{1}{2}]^n$. We fix $I,J \subseteq [n]$ with $|I \triangle J |\geq 3$. For each monomial $x_{I\triangle i} \odot x_{J \triangle i}$ in $f^B_{I,J}$, evaluating at $\mu+\varphi|_{V(Q_n)}$ yields $\mu_{I\setminus i}+ \mu_{J \cup i} + (\varphi(e_I)+\varphi(e_J)-\varphi(0))$, the last term being independent of $i$. The same holds for $x_{I} \odot x_{J}$, in the case $|I \triangle J |\equiv 1 \mod 2$. Thus $\mu$ attains the minimum at least twice if and only if $\mu+\varphi|_{V(\Delta(r,n))}$ does.
\end{proof}

It is now natural to ask, whether type $B_n$ behaves as nicely as type $A_n$, and the converse of Theorem \ref{thm:strong_exchange_all_types} also holds. This turns out to be wrong already for $n=3$, as the following example shows.

\begin{minipage}[l]{0.68\textwidth}
\begin{example}\label{ex:strong_fails_fink}
We get a single strong exchange equation in type $B_3$, which is supported on the entire $3$-cube and given by
$$f^{B}_{\emptyset,[3]} = x_{\emptyset}\odot x_{123}\oplus x_{1}\odot x_{23}\oplus x_{2}\odot x_{13}\oplus x_{3}\odot x_{12}.$$

Some functions on the $3$-cube induce strong matroidal subdivisions, but are not in the Coxeter Dressian. One such example is:
\[(\mu_{\emptyset},\mu_1,\mu_2,\mu_3,\mu_{12},\mu_{13},\mu_{23},\mu_{123})=(0,0,0,-1,2,1,1,0)\, .\]
There are $5$ maximal cells in the induced subdivision, each of which are tetrahedra. Notice that none of these cells contain any antipodes. We visualize the subdivision in Figure \ref{fig:3_cube_subdiv}. However, by direct computation, we observe that there \emph{is} an element of $\Dr(B_3,P_3)$ that induces this subdivision.
\end{example}
\end{minipage}
\hspace{-5mm}
\begin{minipage}[c]{0.4\textwidth}
\centering
\includegraphics[width=0.65\linewidth]{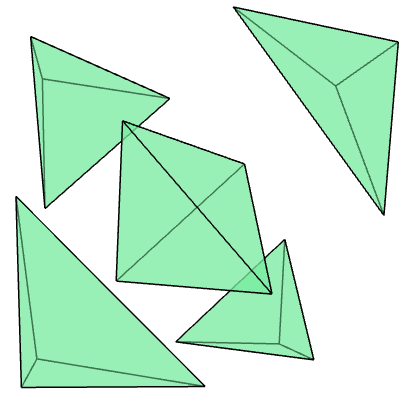}
\captionsetup{justification=raggedright, singlelinecheck=false, margin={1cm,25pt}}
\captionof{figure}{A strong exchange triangulation of the $3$-cube}
\label{fig:3_cube_subdiv}
\end{minipage}

\begin{lemma}\label{lem:four_cube_equations}
Let $\mu \in \R^{2^{[n]}}$ satisfy the all strong exchange equations supported on the $3$-dimensional faces of the $4$-cube. Then $\mu$ is already a strong valuated Coxeter matroid.
\end{lemma}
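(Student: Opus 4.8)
The plan is to fix a general strong exchange equation $f^B_{I,J}$ from \ref{eq:B_n_strong}, set $D \coloneqq I\triangle J$ with $m \coloneqq |D|\ge 3$, and show that the tropical minimum of $f^B_{I,J}$ is attained at least twice at $\mu$. The family \ref{eq:B_n_strong} is equivariant for the signed-permutation action of $W(B_n)$ on the cube: toggling the coordinate $k$ sends $x_K\mapsto x_{K\triangle k}$ and hence $f^B_{I,J}\mapsto f^B_{I\triangle k,\,J\triangle k}$, while permutations act by relabeling. Composing such toggles lets me normalize to $I=\emptyset$ and $J=D=[m]$, so that all relevant variables lie in the $m$-subcube on $[m]$ and the monomials of $f^B_{\emptyset,[m]}$ are exactly the single-flip antipodal pairs $\mu_{\{i\}}+\mu_{[m]\setminus i}$ for $i\in[m]$, together with $\mu_\emptyset+\mu_{[m]}$ when $m$ is odd. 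Since this symmetry also permutes the $3$-faces of the cube among themselves, the hypothesis is preserved under normalization. I would then induct on $m$, the base cases being $m=3$ (immediate, as each $|D|=3$ equation is supported on a $3$-face and is therefore part of the hypothesis) and $m=4$.

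The heart of the argument is the base case $m=4$: deriving the single equation $f^B_{\emptyset,[4]}$, whose four monomials are $\mu_{\{i\}}+\mu_{[4]\setminus i}$ for $i\in[4]$, from the data on the eight $3$-dimensional faces of the $4$-cube. Each such face is obtained by fixing one coordinate to $0$ or $1$, is a $3$-cube carrying exactly one strong exchange equation of type $|D'|=3$, and by hypothesis has its minimum attained twice. I would show that these eight conditions force the minimum of $f^B_{\emptyset,[4]}$ to be attained twice. As each tropical condition depends only on the combinatorial type of $\mu$, namely on which monomials realize the minimum on each face, this is a finite verification: one stratifies $\R^{2^{[4]}}$ by the min-patterns compatible with the eight face equations and checks on each stratum that two monomials of $f^B_{\emptyset,[4]}$ coincide, a computation one can certify in \texttt{OSCAR}. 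I expect this to be the main obstacle. It is elementary but delicate, because the four monomials of the $4$-cube equation are \emph{long diagonals}, differing in all four coordinates, and hence do not occur in any single $3$-face equation, whose monomials are main diagonals of $3$-cubes; the second minimizer must therefore be produced by chaining several face relations together rather than read off from one face.

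Finally, for the inductive step $m\ge 5$ I would argue by contradiction. Assuming the minimum of $f^B_{\emptyset,[m]}$ is attained uniquely, say at $\mu_{\{i_0\}}+\mu_{[m]\setminus i_0}$, I would use the already-established equations of sizes $3$ and $4$ on the sub-cubes spanned by $i_0$ together with two or three further coordinates to relate this value, through intermediate vertices $\mu_{\{i_0,j\}}+\mu_{[m]\setminus\{i_0,j\}}$ and the term $\mu_\emptyset+\mu_{[m]}$, to a second monomial realizing the same minimum. This is a tropical exchange argument in the spirit of the proofs that the $3$-term relations generate the Dressian in type $A$ \cite{DressWenzel_valuatedmatroids,speyer_tropical_linear_spaces} and of Rinc\'on's reduction in type $D$ \cite{rincon_isotropical}; the subtle point is ensuring that the chain of exchanges closes up to exhibit a genuine second minimizer rather than merely a lower bound for the minimum. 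Once every $f^B_{I,J}$ has been verified in this way, $\mu\in\Dr(B_n,P_n)$, which proves the lemma.
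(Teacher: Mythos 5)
Your normalization step is correct and worth keeping: toggling a coordinate $k$ does send $f^B_{I,J}$ to $f^B_{I\triangle k,\,J\triangle k}$, this action together with relabeling permutes the $3$-face equations among themselves, and every strong exchange equation is equivalent to some $f^B_{\emptyset,[m]}$, whose monomials are exactly the ones you list. But beyond this reduction, neither of the two steps that carry the actual content of the lemma is proved. For the base case $m=4$ you only assert that the implication ``eight $3$-face equations $\Rightarrow$ both long-diagonal equations'' is a finite stratification that \emph{could} be certified in \texttt{OSCAR}; you neither perform the check nor give a human-readable argument, and you yourself call it ``the main obstacle.'' Since the lemma, in the only way it is used in the paper (proof of Theorem \ref{thm:b_4_equations}, restricting $\mu$ to the $4$-dimensional faces), is precisely this $4$-cube statement, a promised-but-unexecuted computation is a genuine gap, not a proof. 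The same applies to your inductive step for $m\ge 5$: the ``chain of exchanges'' closing up to produce a second minimizer is exactly the hard part of all Dress--Wenzel/Speyer/Rinc\'on-type reductions, and you explicitly leave it open. A sketch that names its own unresolved step does not establish the statement; moreover you never address the case, for odd $m$, that the unique minimizer is the monomial $x_\emptyset\odot x_{[m]}$ rather than a single-flip pair.

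There is also a structural inefficiency you should be aware of: the whole induction for $m\ge 5$ is avoidable using the paper's own machinery. Proposition \ref{prop:B_n_vs_D_n+1} identifies the type $B_n$ equations monomial-by-monomial with the type $D_{n+1}$ equations, and Rinc\'on's result (\cite[Theorem 5.1]{rincon_isotropical}, invoked in the proof of Theorem \ref{thm:b_4_equations}) already says that the four-term type $D$ relations imply all of them; for $\mu\in\R^{2^{[n]}}$ the support is all of $2^{[n]}$, so that theorem applies. Hence, once the $4$-cube case is settled, the full statement $\mu\in\Dr(B_n,P_n)$ follows for every $n$ by citation, which is visibly the route the paper intends (being an extended abstract, it prints no proof of this lemma, but its proof of Theorem \ref{thm:b_4_equations} shows the modular structure: $3$-face equations give the $|I\triangle J|=4$ equations on each $4$-face, then Rinc\'on gives the rest). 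To turn your proposal into a proof you must actually carry out the $m=4$ verification --- by hand after quotienting by the symmetry you set up, or by the LP/\texttt{OSCAR} check you describe --- and then either complete your exchange induction in detail or, better, replace it by the reduction through Proposition \ref{prop:B_n_vs_D_n+1}.
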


\subsection{Type $C_n$}\label{sec:type_C}
The only minuscule parabolic of type $C$ is $P_1$. Cosets $wP_1 \in W(C_n)/P_1$ are identified with $[n]\sqcup -[n]$ via $wP_1\mapsto w(1)$. Hence, the polytope associated to $(C_n,P_1)$ is given by the \emph{cross polytope}
$$\Diamond_n=\conv(\pm e_{i}:i\in [n]).$$ 
All pairs $(\pm e_{i},\pm e_{j})$ for $i\neq j$ form edges on $\Diamond_n$. On the other hand, $2e_i$ is an element of the type $C_n$ root system. In other words, all possible subpolytopes of $\Diamond_n$ are strong matroids, and all subdivisions of $\Diamond_n$ are strong matroidal. In particular, there are no strong exchange type $C_n$ equations, and the Coxeter Dressian $\Dr(C_n,P_1)$ is exactly the secondary fan of $\Diamond_n$.

\subsection{Type $D_n$}\label{sec:type_D}
Recall that in type $D_n$, the Weyl group is the intersection of the alternating group with the hyperoctahedral group. There are three minuscule parabolic subgroups $P_1,P_{n-1},$ and $P_n$. Note that the last two are isomorphic as subgroups of $D_n$, and they were studied in \cite{rincon_isotropical}.

\subsubsection{Minuscule Parabolic $P_1$ in type $D$}
Note that $W(D_n)/P_1 \cong W(C_n)/P_1$, hence the polytope that arises in this situation is again the cross-polytope $\Diamond_n$ of Section \ref{sec:type_C}. However, $2e_{i}$ is not an element of the $D_n$ root system, so the segment connecting $e_i,-e_{i}$ is not an admissible edge of a type $D_n$ matroidal subdivision of $\Diamond_n$. It follows in a straightforward way that any subdivision of $\Diamond_n$ that is not a triangulation is strong matroidal. The singular strong exchange equation defining $\Dr(D_n,P_1)$ is of the form:
\begin{align}\label{eq:D_1_strong}
f^D_{\Diamond_n}=\bigoplus^n_{i=1}x_{i}\odot x_{-i}\, .
\end{align}
\begin{proposition}\label{prop:D_1_affine}
The type $D_n$ strong exchange equation \ref{eq:D_1_strong} is affinely invariant.
\end{proposition}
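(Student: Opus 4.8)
The plan is to follow the same strategy as in the type $B_n$ case (Proposition \ref{prop:B_n_affine}). Here the variables $x_i$ and $x_{-i}$ for $i \in [n]$ are indexed by the cosets in $W(D_n)/P_1$, which we identify with the vertices $\pm e_i$ of the cross-polytope $\Diamond_n$. The single equation $f^D_{\Diamond_n} = \bigoplus_{i=1}^n x_i \odot x_{-i}$ pairs up exactly the antipodal vertices of $\Diamond_n$, and it is this central symmetry that will make affine invariance almost immediate.

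Concretely, I would fix an arbitrary $\mu \colon [n] \sqcup -[n] \to \T$ and an affine linear function $\varphi \colon \R^n \to \R$, written as $\varphi(v) = \langle a, v\rangle + c$ for some $a \in \R^n$ and $c \in \R$. Evaluating the $i$-th monomial $x_i \odot x_{-i}$ at $\mu + \varphi|_{V(\Diamond_n)}$ yields
\[
(\mu_i + \varphi(e_i)) + (\mu_{-i} + \varphi(-e_i)) = \mu_i + \mu_{-i} + \varphi(e_i) + \varphi(-e_i).
\]
The crucial observation is that $\varphi(e_i) + \varphi(-e_i) = \langle a, e_i\rangle + \langle a, -e_i\rangle + 2c = 2c = 2\varphi(0)$, which is independent of $i$. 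Thus adding $\varphi|_{V(\Diamond_n)}$ shifts the value of every monomial by the same constant $2\varphi(0)$.

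Since a uniform additive shift of all monomial values preserves the set of indices at which the tropical minimum is attained, the minimum of $f^D_{\Diamond_n}$ is attained at least twice at $\mu$ if and only if it is attained at least twice at $\mu + \varphi|_{V(\Diamond_n)}$. This is precisely the statement that $\mu \in V^{\trop}(f^D_{\Diamond_n})$ if and only if $\mu + \varphi|_{V(\Diamond_n)} \in V^{\trop}(f^D_{\Diamond_n})$, establishing affine invariance. I do not expect any genuine obstacle here beyond recording the antipodal-symmetry identity $\varphi(e_i) + \varphi(-e_i) = 2\varphi(0)$; this cancellation of the linear part of $\varphi$ across antipodes is the same mechanism that drives the type $B_n$ argument, and it reflects the fact that each monomial of a strong exchange equation is supported on a pair of antipodes on a face of the polytope.
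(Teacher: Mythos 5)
Your proof is correct and follows the same route as the paper: evaluate each monomial $x_i \odot x_{-i}$ at $\mu + \varphi|_{V(\Diamond_n)}$, note that the linear part of $\varphi$ cancels across antipodes so every monomial shifts by the same constant $2\varphi(0)$, and conclude that the set of minimizers is unchanged. The only difference is that you spell out the cancellation $\varphi(e_i)+\varphi(-e_i)=2\varphi(0)$ explicitly, which the paper states without derivation.
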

\begin{proof}
Consider $f^{E_7}$, and $\mu \in \T^{V(\Diamond_n)}$. For $\varphi: \R^n \to \R$ affine linear, we have that $x_i \odot x_{-i}$ evaluated at $\mu+\varphi|_{V(\Diamond_n)}$ yields $\mu_i+\mu_{-i}+2\varphi(0)$. Therefore, $\mu$ attains the minimum at least twice if and only if $\mu+\varphi$ does.
\end{proof}

\begin{proposition}\label{prop:strong_ex_cross}
    A height function $\mu$ induces a type $D$ strong matroidal subdivision of $\Diamond_n$ if and only if $\mu\in\Dr(D_n,P_{1})$.
\end{proposition}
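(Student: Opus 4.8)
The statement is an equivalence in which one direction is already available. The plan is to deduce the implication ``$\mu\in\Dr(D_n,P_1)\Rightarrow$ strong matroidal'' directly from Theorem~\ref{thm:strong_exchange_all_types}: since equation~\ref{eq:D_1_strong} is affinely invariant (Proposition~\ref{prop:D_1_affine}), Theorem~\ref{thm:cells_strong_exchange} shows that for $\mu\in\Dr(D_n,P_1)$ the indicator $\mu_C$ of every cell $C\in S(\mu)$ satisfies $f^D_{\Diamond_n}$, so by \cite[Theorem~1.1]{Fink_et_al} each cell is a strong Coxeter matroid polytope. Hence all the genuine content lies in the converse, which I would prove by contraposition.

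So suppose $\mu\notin\Dr(D_n,P_1)$. Tropically this means the minimum in $f^D_{\Diamond_n}(\mu)=\min_{i}(\mu_i+\mu_{-i})$ is attained at a \emph{unique} index $i_0$; write $m=\mu_{i_0}+\mu_{-i_0}$, so that $\mu_j+\mu_{-j}>m$ for every $j\neq i_0$. The heart of the argument is to locate the cell of $S(\mu)$ whose relative interior contains the origin. First I would record that every representation of $0\in\Diamond_n$ as a convex combination $0=\sum_k\bigl(a_k\,e_k+b_k(-e_k)\bigr)$ forces $a_k=b_k$ for all $k$ and $\sum_k a_k=\tfrac12$, so its lifted height equals $\sum_k a_k(\mu_k+\mu_{-k})$. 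Under the uniqueness assumption this is minimized \emph{only} by $a_{i_0}=\tfrac12$, $a_k=0$ for $k\neq i_0$, giving minimal height $m/2$ attained at the single support $\{e_{i_0},-e_{i_0}\}$.

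By the standard theory of regular subdivisions (see \cite{DeLoeraRambauSantos}), when a point admits a \emph{unique} height-minimizing convex representation, its support is exactly the vertex set of the cell of $S(\mu)$ whose relative interior contains that point. Thus the origin lies in the relative interior of the segment $[e_{i_0},-e_{i_0}]$, which is therefore a $1$-dimensional face of $S(\mu)$ and hence an edge of some maximal cell $C$. Its direction is $2e_{i_0}$, which---unlike in type $C_n$---is \emph{not} a root of $D_n$. Consequently $C$ is not a Coxeter matroid polytope, so $S(\mu)$ is not even matroidal, let alone strong matroidal. This contradicts the hypothesis and completes the contrapositive.

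I expect the main obstacle to be the clean justification of the step ``unique minimizer $\Rightarrow$ $[e_{i_0},-e_{i_0}]$ is an actual edge of $S(\mu)$'': one must make precise that the minimal-support representation of the origin determines the cell containing it, rather than merely bounding the value of the lower envelope. A secondary subtlety is the treatment of height functions taking the value $\infty$; there one restricts the whole argument to $\conv(\supp(\mu))$ and checks that a diagonal edge still appears precisely when the finite minimum $\min_i(\mu_i+\mu_{-i})$ is attained uniquely, matching the tropical convention under which an all-$\infty$ evaluation satisfies the equation.
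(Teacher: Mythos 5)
Your proposal is correct, but the substantive (converse) direction takes a genuinely different route from the paper. The paper argues directly: given a strong matroidal subdivision, any maximal cell $C_v$ is full-dimensional, hence uses at least $n+1$ of the $2n$ vertices of $\Diamond_n$, so by pigeonhole it contains an antipodal pair $e_i,e_{-i}$; the strong exchange axiom then produces a \emph{second} pair $e_j,e_{-j}$ in the same cell (every $D_n$-reflection separating $e_i$ from $e_{-i}$ carries this pair to some $\{e_j,e_{-j}\}$); finally the supporting functional $v$ of $C_v$ gives $\mu_i+\mu_{-i}=\mu_j+\mu_{-j}\leq \mu_k+\mu_{-k}$ for all $k$, so the minimum in $f^D_{\Diamond_n}$ is attained twice. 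You instead argue by contraposition, localizing at the origin: unique attainment of $\min_i(\mu_i+\mu_{-i})$ forces the diagonal $[e_{i_0},-e_{i_0}]$ to be the cell of $S(\mu)$ containing $0$ in its relative interior, and its direction $2e_{i_0}$ is not a $D_n$ root. The regular-subdivision fact you flag as the main obstacle is indeed standard and holds: minimizing representations of $(0,h(0))$ must be supported on the lower face above $0$, and a point in the relative interior of that face admits a representation with full support on its vertices, so uniqueness of the minimizer pins down the vertex set of the cell. Comparing the two: the paper's argument is shorter and reuses the strong exchange axiom, which is also the template for its $E_6$ argument (Proposition \ref{prop:e6_strong_ex}); your argument never invokes strong exchange, only the root-edge definition of a Coxeter matroid polytope, and therefore proves a strictly stronger statement --- a merely \emph{matroidal} regular subdivision of $\Diamond_n$ already forces $\mu\in\Dr(D_n,P_1)$, so for this polytope ``matroidal'', ``strong matroidal'', and ``in the Dressian'' coincide. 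Your explicit treatment of heights with infinite entries (restricting to $\conv(\supp(\mu))$, where only indices with both $\mu_i,\mu_{-i}$ finite can carry weight in a representation of the origin) is also more careful than the paper, whose proof implicitly works in the full-dimensional, finite-height setting. Finally, note that the two mechanisms are dual: your forbidden edge $[e_{i_0},-e_{i_0}]$ appears in a cell exactly when that cell contains no second antipodal pair, which is precisely the configuration the paper's strong exchange step excludes.
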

\begin{proof}
One direction follows immediately from Proposition \ref{prop:D_1_affine}. For the other, suppose the $\mu$ induced subdivision is strong matroidal, and let $\Delta(\mathcal{M})$ be a maximal cell. Then $\Delta(\mathcal{M}) = C_v$,
$$C_v = \conv(e_i: \langle v, e_i\rangle) + \mu_i\leq \langle v,e_k\rangle + \mu_k \text{ where }i,k\in [n]\cup[-n]).$$
Now, observe that for $C_v$ to be full dimensional, $C_v$ must use at least $n+1$ vertices of $\Diamond_n$. As a consequence, we must have some some $i \in [n]$ where $e_i,e_{-i}\in C_v$. Furthermore, since $\Delta(\mathcal{M})=C_v$ is a strong matroid, we have at least one $j$ such that $e_{j},e_{-j}\in C_v$. This leads to the following collection of relations:
\begin{align*}
\langle v,e_{\pm i}\rangle+\mu_{\pm i}=\langle v,e_{\pm j}\rangle + \mu_{\pm j}\leq \langle v,e_{\pm k}\rangle + \mu_{\pm k} \text{ for any } k\in [n].
\end{align*}
From these, one can deduce $\mu_{i}+\mu_{-i}=\mu_{j}+\mu_{-j}\leq \mu_{k}+\mu_{-k}\text{ for any $k$}$. 
\end{proof}
\begin{proposition}\label{prop:D_5,6}
    For $n\in\{5,6\}$, the Coxeter Dressian $\Dr(D_n,P_1)$ is isomorphic to a proper subfan of the secondary fan of $\Diamond_n$. That is, the coarsest polyhedral structure of the former agrees with the fan structure of the latter, the Coxeter Dressian. 
\end{proposition}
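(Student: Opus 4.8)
The plan is to reduce the statement to a finite combinatorial comparison and then carry out that comparison by the computations described above. Write $\Sigma$ for the secondary fan of $\Diamond_n$, regarded modulo its lineality space of affine-linear functions, and recall that two height functions lie in the relative interior of the same cone of $\Sigma$ exactly when they induce the same regular subdivision of $\Diamond_n$. The first task is to establish, at the level of supports, that $\Dr(D_n,P_1)$ is a union of cones of $\Sigma$. By Proposition \ref{prop:strong_ex_cross}, a height function $\mu$ lies in $\Dr(D_n,P_1)$ if and only if the induced subdivision $S(\mu)$ is strong matroidal; since $S(\mu)$ is constant on each relatively open cone of $\Sigma$, the predicate ``$\mu\in\Dr(D_n,P_1)$'' is constant on relatively open secondary cones, so the support of $\Dr(D_n,P_1)$ is a union of relatively open cones of $\Sigma$. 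Because $\Dr(D_n,P_1)=V^{\trop}(f^D_{\Diamond_n})$ is closed, it contains the closure of each such cone, and hence its support is in fact a union of closed cones of $\Sigma$. This step is general and uses no restriction on $n$.

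Properness is then immediate and again holds for all $n$. Any regular triangulation of $\Diamond_n$ corresponds to a maximal cone of $\Sigma$, and each of its full-dimensional simplices uses $n+1$ of the $2n$ vertices; by the pigeonhole principle applied to the $n$ antipodal pairs $\{e_i,-e_i\}$, such a simplex contains a pair of antipodes, whose connecting edge is parallel to $2e_i$. Since $2e_i$ is not a root of type $D_n$, this cell is not matroidal, so no triangulation is strong matroidal. Consequently no maximal cone of $\Sigma$ is contained in $\Dr(D_n,P_1)$, and the inclusion is proper.

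It remains to verify that the \emph{coarsest} fan structure on $\Dr(D_n,P_1)$ agrees with the structure it inherits from $\Sigma$ — equivalently, that no collection of secondary cones lying in $\Dr(D_n,P_1)$ merges into a single coarser cone of its intrinsic structure. This is the point at which I expect the restriction to $n\in\{5,6\}$ to be essential and where the main obstacle lies: there is no a priori reason for the ``minimum attained at least twice'' hypersurface structure of $V^{\trop}(f^D_{\Diamond_n})$ and the secondary structure to coincide, and ruling out spurious mergers of adjacent strong-matroidal cones is a genuine combinatorial check rather than a formal consequence of the previous two steps. I would discharge it by enumerating the regular subdivisions of $\Diamond_n$ (the cones of $\Sigma$), selecting the strong matroidal ones via the polytope-level criterion from Section \ref{sec:type_D} (a cell is matroidal precisely when no antipodal pair of its vertices spans an edge), and then confirming directly that the resulting subcomplex is closed under passing to faces and that each of its maximal cones is a single secondary cone, not a nontrivial union of several. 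For $n=5,6$ this is a finite computation, carried out in \texttt{OSCAR}; together with the inclusion and the properness established above it yields the claimed isomorphism of $\Dr(D_n,P_1)$ onto a proper subfan of the secondary fan of $\Diamond_n$.
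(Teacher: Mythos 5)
Your proposal is correct, and it refines rather than merely replicates the paper's argument: the paper's entire proof is the single sentence ``We verify this computationally,'' with a pointer to the \texttt{OSCAR} notebook, whereas you prove two of the three ingredients structurally, for all $n$, and delegate only the third to the machine. Your step (i) legitimately invokes Proposition \ref{prop:strong_ex_cross} (it is proved before and independently of Proposition \ref{prop:D_5,6}, so there is no circularity), and together with closedness of $V^{\trop}(f^D_{\Diamond_n})$ it does show that the support of $\Dr(D_n,P_1)$ is a union of closed secondary cones. Your step (ii), the pigeonhole argument that any full-dimensional simplex on the $2n$ vertices of $\Diamond_n$ contains an antipodal pair and hence an edge in direction $2e_i$, which is not a $D_n$-root, correctly yields properness for every $n$; the paper states the relevant facts only informally in the prose surrounding the proposition. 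Only your step (iii) --- that the coarsest polyhedral structure on this support does not merge adjacent secondary cones --- actually uses the computation and the restriction $n\in\{5,6\}$, and this is exactly what the paper's notebook checks; so your route buys a precise isolation of what the computer must verify, while the paper's buys brevity. One further remark: step (iii) also admits a hand proof valid for all $n$, so the restriction to $n\in\{5,6\}$ is an artifact of the method rather than of the statement. Indeed, the maximal cells of the coarsest structure of the tropical hypersurface $V^{\trop}(f^D_{\Diamond_n})$ are the cones $\sigma_{ij}=\{\mu : \mu_i+\mu_{-i}=\mu_j+\mu_{-j}\le \mu_k+\mu_{-k} \text{ for all } k\}$; subtracting a suitable affine-linear function puts any point of the relative interior of $\sigma_{ij}$ into the symmetric form $\mu_{\pm i}=\mu_{\pm j}=0$, $\mu_{\pm k}=t_k>0$, whose lower envelope is $x\mapsto \sum_{k\neq i,j}t_k\lvert x_k\rvert$; the induced subdivision, with maximal cells $\Diamond_n\cap\{\epsilon_k x_k\ge 0\}$, is therefore independent of the $t_k$, so each $\sigma_{ij}$ is a single secondary cone (and likewise for the lower-dimensional cells $\sigma_T$).
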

\begin{proof}
    We verify this computationally. See the related notebook on the \texttt{github} repository.
\end{proof}
It follows from Proposition \ref{prop:D_5,6}, and the fact that any subdivision of $\Diamond_n$ which isn't a triangulation is strong matroidal, that $\Dr(D_n,P_1)$ is isomorphic to the subfan of the secondary fan consisting of codimension-$1$ cones.

\subsubsection{Minuscule Parabolic $P_n$}
We identify cosets $A \in W(D_n)/P_{n}$ with even subsets of ${[n]}$ via $A \mapsto A(-[n])\cap [n]$. In analogy to the $B_n$ case, the corresponding polytope is the \emph{demicube}, the subpolytope of the cube with even vertices. We note that the parabolics $P_{n-1}$ and $P_n$ are isomorphic as subgroups of $W(D_n)$, hence we only consider the latter case. The strong exchange equations are given by
\begin{align}\label{eq:D_n_strong}
f_{I,J}^{D} \coloneqq 
\bigoplus_{i \in I\triangle J} 
x_{I \triangle i} \odot x_{J \triangle i}  
\end{align}
for $I,J \subseteq [n]$ with $|I \triangle J |\geq 4$ and $I,J$ of odd cardinality. Affine invariance follows exactly like in \ref{prop:B_n_affine}.
There is a simple construction relating valuated strong Coxeter matroids of type $B_n$ to those of type $D_{n+1}$, with minuscule parabolic $P_{n+1}$.  
\begin{proposition}\label{prop:B_n_vs_D_n+1}
The embedding $\T^{2^{[n]}}\hookrightarrow \T^{2^{[n+1]}}$ given by
\[2^{[n]}\longrightarrow 2^{[n+1]} \quad A \mapsto \overline{A} = \begin{cases}
A  & \text{ if } |A| \text{ even} \\
A\cup \{n+1\} & \text{ if } |A| \text{ odd.} \\
\end{cases}\]
induces a linear isomorphism $\Dr(B_n,P_n) \cong \Dr(D_{n+1},P_{n+1})$.
\end{proposition}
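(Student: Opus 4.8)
The plan is to reduce the statement to a combinatorial bijection between the two families of strong exchange equations; once the equations are matched monomial-by-monomial, the isomorphism of tropical prevarieties is formal. First I would record that $A \mapsto \overline{A}$ restricts to a bijection from $2^{[n]}$ onto the even subsets of $[n+1]$, which is exactly the index set $W(D_{n+1})/P_{n+1}$: an even set $B \subseteq [n+1]$ is the image of $B$ itself when $n+1 \notin B$ and of $B \setminus \{n+1\}$ when $n+1 \in B$. Its inverse is the ``forget $n+1$'' map $\phi \colon B \mapsto B \cap [n]$, so that $\phi(\overline{A}) = A$. Relabeling coordinates along this bijection is a linear isomorphism $\T^{2^{[n]}} \xrightarrow{\sim} \T^{W(D_{n+1})/P_{n+1}}$ sending $\mu$ to the $\nu$ determined by $\nu_{B} = \mu_{\phi(B)}$. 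It therefore suffices to prove that $\mu$ tropically satisfies all equations \eqref{eq:B_n_strong} if and only if $\nu$ satisfies all equations \eqref{eq:D_n_strong}.

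The heart of the argument is to transport a type $D_{n+1}$ equation through $\phi$ and recognize the result as a type $B_n$ equation. Fix $I', J' \subseteq [n+1]$ odd with $|I' \triangle J'| \geq 4$ and put $I = \phi(I')$, $J = \phi(J')$. The key computation is the action of $\phi$ on the monomials of $f^D_{I',J'}$: for an index $i \in (I' \triangle J') \cap [n] = I \triangle J$ one has $\phi(I' \triangle i) = I \triangle i$ and $\phi(J' \triangle i) = J \triangle i$, whereas for the single exceptional index $i = n+1$ (which occurs precisely when $n+1 \in I' \triangle J'$) the flip is invisible to $\phi$ and the monomial collapses to $x_I \odot x_J$. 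Thus $f^D_{I',J'}$ is carried to $\bigoplus_{i \in I \triangle J} x_{I \triangle i} \odot x_{J \triangle i}$, with an additional summand $x_I \odot x_J$ exactly when $n+1 \in I' \triangle J'$; by the very definition $\nu_B = \mu_{\phi(B)}$, the monomial values of $f^D_{I',J'}$ at $\nu$ then coincide with those of this transported polynomial at $\mu$.

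It then remains to check that the exceptional summand and the degree bounds line up with the two branches of \eqref{eq:B_n_strong}, and this parity bookkeeping is the only delicate step. Since $I', J'$ are the unique odd subsets of $[n+1]$ restricting to $I, J$, one has $n+1 \in I'$ iff $|I|$ is even, hence $n+1 \in I' \triangle J'$ iff $|I|$ and $|J|$ have opposite parity, i.e.\ iff $|I \triangle J|$ is odd; moreover $|I' \triangle J'|$ equals $|I \triangle J|$ in the even case and $|I \triangle J| + 1$ in the odd case. Therefore the summand $x_I \odot x_J$ is present exactly when $|I \triangle J|$ is odd, matching the two branches of \eqref{eq:B_n_strong}, and $|I' \triangle J'| \geq 4$ is equivalent to $|I \triangle J| \geq 3$ in both parities, matching its index range. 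Running the assignment $(I',J') \mapsto (I,J)$ backwards via the unique odd lift is thus a bijection between the two equation families under which matched equations have identical tropical values at $\nu$ and $\mu$, so the ``minimum attained at least twice'' condition transfers across, giving $\Dr(B_n,P_n) \cong \Dr(D_{n+1},P_{n+1})$. I expect this parity-and-threshold matching to be the main obstacle, everything else being formal.
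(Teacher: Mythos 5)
Your proof is correct and takes essentially the same approach as the paper: both establish a bijection between the type $B_n$ and type $D_{n+1}$ strong exchange equations and transfer satisfaction monomial-by-monomial, with the same parity bookkeeping identifying the extra summand $x_I \odot x_J$ with the $i = n+1$ monomial. The only cosmetic difference is direction — you transport $D$-equations down via the forgetful map $\phi$, while the paper lifts $B$-equations up via the odd-lift $(I,J)\mapsto(I',J')$ and splits into cases by the parities of $|I|,|J|$; these are inverse descriptions of the same correspondence.
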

\begin{proof}
We first show that type $B_n$ strong exchange equations are in bijection to type $D_{n+1}$ strong exchange equations via $f_{I,J}^{B}\mapsto f_{I',J'}^D$. Here $I'= I \cup \{n+1\}$ if $|I|$ even, and $I'=I$ if $|I|$ odd, similarly, $J'$. Bijectivity follows from the fact that $I'$, $J'$ are always of odd cardinality, and $|I\triangle J|\geq 3$ if and only if $|I' \triangle J'|\geq 4$. 

Let $\mu \in \Dr(B_n,P_n)$, and $\overline{\mu} \in \T^{2^{[n+1]}}$ its corresponding image. We now show that $\mu$ satisfies $f_{I,J}^{B}$ if and only if $\overline{\mu}$ satisfies $f_{I',J'}^D$. If both $I,J$ are of odd cardinality, then the respective equations are the same, and there is nothing to show. If $I$ is odd, and $J$ is even, then $I'=I$, and $J'=J\cup \{n+1\}$. In that case, $I'\triangle J' = I\triangle J \cup \{n+1\}$ is of odd cardinality. Hence, for all $i \in I \triangle J$, we have $I \triangle\{i\}$ is even, and $\overline{J \triangle i} = J' \triangle i $. Finally, for $i=\{n+1\}$, we have $I \triangle i = I$, and $J = J'\triangle i$, finishing the proof for the case $|I|$ even, $|J|$ odd. Lastly, let $I,J$ both be even. Then $I \triangle J= I'\triangle J'$, and for all $i\in I \triangle J$ we have $\overline{I\triangle i}=I'\triangle i$, finishing the proof.
\end{proof}

We note that the above is already true algebraically, as the same morphism over any field maps the minuscule type $B_n$ flag variety isomorphically onto the even component of the spinor variety. There is also an equivalent map $\Dr(B_n,P_n)\cong \Dr(D_{n+1},P_{n})$, mapping subsets of $[n]$ to odd subsets of $[n+1]$. As the even and odd components of the spinor variety are isomorphic, this is not surprising. The following can be deduced immediately from Proposition \ref{prop:B_n_vs_D_n+1} and \cite[Theorem 4.6]{rincon_isotropical}.
\begin{corollary}
For $n\leq 4$, the Coxeter Dressian $\Dr(B_n,P_n)$ is the tropicalization of the minuscule type $B_n$ flag variety. For $n\geq 6$, it is strictly larger than the tropicalization. In particular, not every valuated Coxeter matroid of type $B_n$ is realizable for $n\geq 6$.
\end{corollary}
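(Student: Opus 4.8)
The plan is to deduce both claims by transporting the problem to type $D_{n+1}$, where Rincón's results apply directly. The linear isomorphism $\Dr(B_n,P_n)\cong\Dr(D_{n+1},P_{n+1})$ of Proposition \ref{prop:B_n_vs_D_n+1} is induced by the coordinate relabeling $A\mapsto\overline A$, whose image is exactly the collection of even subsets of $[n+1]$, i.e.\ the index set of $W(D_{n+1})/P_{n+1}$. The key observation is that this very relabeling, now read over an arbitrary valued field, is the isomorphism identifying the minuscule type $B_n$ flag variety with the even component of the spinor variety, as recorded in the remark following Proposition \ref{prop:B_n_vs_D_n+1}. Since a relabeling of (projective) coordinates is a monomial isomorphism, tropicalization commutes with it, so the tropical variety of the type $B_n$ flag variety is carried onto the tropicalization of the even spinor component by the same map $A\mapsto\overline A$. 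Consequently the isomorphism of Dressians restricts to an isomorphism of tropicalizations, and the inclusion of the tropicalization into the Dressian is an equality on the $B_n$ side if and only if it is an equality on the $D_{n+1}$ side.

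Concretely, I would proceed in four steps. First, verify that $A\mapsto\overline A$ is a bijection from $2^{[n]}$ onto the even subsets of $[n+1]$ and that, under it, the coordinates of the $B_n$ flag variety match the spinor coordinates of the even component; this is the content of the remark, which I would expand just enough to make the coordinate dictionary explicit. Second, conclude that tropicalizing both sides, and using that a permutation of coordinates tropicalizes to the corresponding permutation of $\T$-coordinates, yields $\trop(\text{type }B_n\text{ flag})\cong\trop(\text{even spinor component})$ compatibly with Proposition \ref{prop:B_n_vs_D_n+1}. Third, invoke \cite[Theorem 4.6]{rincon_isotropical}, which compares $\Dr(D_m,P_m)$ with the tropical (even) spinor variety: they coincide for $m\le 5$, while the Dressian is strictly larger for $m\ge 7$. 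Setting $m=n+1$ gives equality for $n\le 4$ and strict containment for $n\ge 6$. Fourth, transport these statements back across the isomorphism to obtain the corollary for $\Dr(B_n,P_n)$; the final ``in particular'' assertion is then immediate, since any point of $\Dr(B_n,P_n)$ lying outside the tropicalization is by definition a non-realizable valuated Coxeter matroid.

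I expect the only genuine subtlety to be the compatibility of tropicalization with the isomorphism of Proposition \ref{prop:B_n_vs_D_n+1}, namely checking that the algebraic identification of the two varieties is realized by the same relabeling $A\mapsto\overline A$ used on the combinatorial side, so that the two tropicalizations are literally interchanged by that map. Once this dictionary is pinned down, the argument is formal. I would also flag the bookkeeping around the shift $m=n+1$: the cited theorem settles $m\le 5$ and $m\ge 7$ but is silent for $m=6$, which is precisely why the corollary omits $n=5$; I would state this explicitly rather than attempt to close that case here.
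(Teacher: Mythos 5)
Your proposal takes essentially the same route as the paper: the paper deduces this corollary ``immediately'' from Proposition~\ref{prop:B_n_vs_D_n+1} together with the remark that the same relabeling $A\mapsto\overline{A}$ algebraically identifies the minuscule $B_n$ flag variety with the even component of the spinor variety, and then invokes \cite[Theorem 4.6]{rincon_isotropical}. Your write-up is correct and simply makes explicit the details the paper suppresses, namely the compatibility of tropicalization with the coordinate relabeling and the index shift $m=n+1$ (which, as you rightly note, is why the case $n=5$, corresponding to $D_6$, is omitted).
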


\begin{theorem}\label{thm:b_4_equations}
Let $\mu \in \T^{2^{[n]}}$, such that $\supp(\mu)$ is a strong Coxeter matroid of type $B_n$ . Then the following statements are equivalent:
\begin{enumerate}[(a)]
\item $\mu$ is in the Coxeter Dressian,
\item $\mu$ satisfies the \emph{four term strong exchange equations} $f_{I,J}^{B}$ for $|I \triangle J| \leq 4$.
\end{enumerate}
Moreover, if all entries of $\mu$ are finite, in (b) it suffices to only consider the case $|I \triangle J| \leq 3$.
\end{theorem}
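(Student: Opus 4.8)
The plan is to deduce the theorem from the already established type $D$ theory rather than to re-derive everything in type $B$ from scratch. The implication (a) $\Rightarrow$ (b) is immediate, since the four term equations form a subset of all strong exchange equations \ref{eq:B_n_strong}. For the substantive direction (b) $\Rightarrow$ (a) I would transport the whole question across the linear isomorphism $\Dr(B_n,P_n) \cong \Dr(D_{n+1},P_{n+1})$ of Proposition \ref{prop:B_n_vs_D_n+1}. Writing $\overline\mu \in \T^{2^{[n+1]}}$ for the image of $\mu$ under $A \mapsto \overline A$, the proof of that proposition is entirely equation-by-equation: for each pair $I,J$ it exhibits a pair $I',J'$ of odd subsets of $[n+1]$ with $f^B_{I,J} \mapsto f^D_{I',J'}$ and shows that $\mu$ satisfies \ref{eq:B_n_strong} for $I,J$ if and only if $\overline\mu$ satisfies \ref{eq:D_n_strong} for $I',J'$. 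This equivalence holds for an arbitrary $\mu$, not only for points already known to lie in the Dressian, which is exactly what I need.

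The key bookkeeping step is to check that this bijection of equations matches the two restricted classes precisely. Since $I',J'$ are always odd, one has $|I' \triangle J'| = |I \triangle J|$ when $|I \triangle J|$ is even and $|I' \triangle J'| = |I \triangle J| + 1$ when $|I \triangle J|$ is odd; consequently the type $B$ equations with $|I \triangle J| \in \{3,4\}$ correspond exactly to the minimal type $D$ equations with $|I' \triangle J'| = 4$, while the two full systems correspond to one another. I would also record that $\supp(\mu)$ is a strong Coxeter matroid of type $B_n$ if and only if $\supp(\overline\mu) = \overline{\supp(\mu)}$ is a strong Coxeter matroid of type $D_{n+1}$ — this is the Boolean shadow of Proposition \ref{prop:B_n_vs_D_n+1}, reflecting that $A \mapsto \overline A$ identifies the minuscule type $B_n$ flag variety with the even spinor component. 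With these identifications in place, hypothesis (b) says precisely that $\overline\mu$ satisfies all four term type $D$ relations, and \cite[Theorem 4.6]{rincon_isotropical} (the type $D$ analog, for points whose support is a strong type $D$ Coxeter matroid) then yields $\overline\mu \in \Dr(D_{n+1},P_{n+1})$. Transporting back along the isomorphism gives $\mu \in \Dr(B_n,P_n)$, that is, (a).

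For the moreover clause I would argue separately, because the drop from $|I\triangle J| \le 4$ to $|I\triangle J| \le 3$ discards the even equations and has no clean counterpart on the type $D$ side, where all relations are even. When all entries of $\mu$ are finite, $\supp(\mu) = 2^{[n]}$ is the full cube, and the hypothesis becomes that $\mu$ satisfies every strong exchange equation supported on a $3$-dimensional face. This is exactly the situation of Lemma \ref{lem:four_cube_equations}, which upgrades the $3$-face relations to a full strong valuated Coxeter matroid; hence $\mu \in \Dr(B_n,P_n)$ and only the case $|I \triangle J| \le 3$ is needed.

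I expect the main obstacle to be pinning down the exact relation set in \cite[Theorem 4.6]{rincon_isotropical} and confirming that the transported hypotheses — in particular the support condition and the precise family of four term relations — line up with it on the nose, so that no stray type $D$ relation is left unaccounted for. A secondary point requiring care is that the equivalence of equations furnished by Proposition \ref{prop:B_n_vs_D_n+1} must be invoked at the level of single equations for arbitrary $\mu$; I would isolate this as a standalone observation to avoid the circularity of assuming $\mu \in \Dr(B_n,P_n)$ at the outset. The finite refinement, by contrast, is self-contained once Lemma \ref{lem:four_cube_equations} is granted.
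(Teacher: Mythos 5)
Your proposal is correct and follows the paper's proof essentially step for step: (a) $\Rightarrow$ (b) by definition, (b) $\Rightarrow$ (a) by transporting $\mu$ to type $D_{n+1}$ via Proposition \ref{prop:B_n_vs_D_n+1} and invoking Rinc\'on's four-term result (the paper cites Theorem 5.1 of \cite{rincon_isotropical} rather than Theorem 4.6, which is the realizability statement), and the finite-entry refinement via Lemma \ref{lem:four_cube_equations}. The only cosmetic difference is that the paper explicitly restricts $\mu$ to the various $4$-dimensional faces of the $n$-cube before applying that lemma (which is stated only for the $4$-cube), a step your write-up compresses but clearly intends.
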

\begin{proof}
The statement (a) $\Rightarrow$ (b) holds by definition. For the other direction, we need to show that $\mu$ satisfies all strong exchange equations $f^B_{I,J}$. Consider the image $\overline{\mu}$ from Proposition \ref{prop:B_n_vs_D_n+1}. By \cite[Theorem 5.1]{rincon_isotropical}, $\overline{\mu}$ satisfies the type $D_{n+1}$ strong exchange equations if and only if it satisfies the respective four-term strong exchange equations. Hence $\overline{\mu}\in \Dr(D_{n+1},P_{n+1})$, and we conclude $\mu \in \Dr(B_{n},P_{n})$.

Let additionally all entries of $\mu$ be finite. Restricting $\mu$ to the various $4$-cubes and applying Lemma \ref{lem:four_cube_equations} we obtain that $\mu$ satisfies all equations $f_{I,J}^B$ with $|I \triangle J| = 4$ if it does so for all $|I \triangle J |=3$.
\end{proof}

\begin{example}
The point $\mu \in \T^{2^{[4]}}$ given by $\mu_{\emptyset}=\mu_{1234}=\mu_{12}=0$, $\mu_{34}=1$, and $\mu_A=\infty$ for all other $A\in 2^{[n]}$ does not satisfy $f^B_{1,234}$. Hence, it is not in the Coxeter Dressian However, it satisfies all equations supported on proper $3$-faces of the $4$-cube. 
\end{example}

\subsection{Type $E_6$}\label{sec: E_6}
In type $E_6$, the two minuscule parabolic subgroups are $P_1$ and $P_6$. Since $P_1\cong P_6 \cong W(D_5)$, we have  $W(E_6)/P_1 \cong W(E_6)/P_6$ as sets with $W$-action. We thus consider both cases simultaneously. The polytope is commonly referred to as $2_{21} = \Delta(E_6/P_1)$ \cite{coxeter_2_21}. The face structure of $2_{21}$ is studied in \cite{Cox_regular}. We recall the relevant facts. The polytope $2_{21}$ is $6$ dimensional and has $27$ vertices, each at most edge-distance 2 from each other. Each distance $2$ pair of vertices defines a $\Diamond_5$ cross polytope in the $5$-skeleton of $2_{21}$, of which there are $27$. All other facets are simplices. For each distance-$2$ pair $(A,B)$, we get a strong exchange equation
\begin{align}\label{eq:E_6_strong}
f^{E_{6}}_{A,B} = x_{A}\odot x_{B} \oplus \bigoplus^{4}_{i=1}x_{C_i}\odot x_{C^{\prime}_{i}},
\end{align}
where the pairs $(A,B)$ and $(C_i,C^{\prime}_i)$ for $i\in [4]$ are antipodes of the corresponding cross polytope. All strong exchange equations for minuscule $E_6$ arise in this way. By Proposition \ref{prop:D_1_affine}, the respective equations are affinely invariant.
\begin{proposition}\label{prop:e6_strong_ex}
A height function $\mu$ induces a strong matroidal subdivision of $2_{21}$ if and only if $\mu\in\Dr(E_{6},P_{1})$.
\end{proposition}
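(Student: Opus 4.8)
\emph{Proof strategy.} The plan is to treat the two implications separately, the forward one being immediate from the general theory and the converse carrying the real content. For the direction $\mu\in\Dr(E_6,P_1)\Rightarrow$ strong matroidal, I would simply invoke the machinery already set up: by Proposition \ref{prop:D_1_affine} the equations \ref{eq:E_6_strong} are affinely invariant, so Theorem \ref{thm:strong_exchange_all_types} (equivalently Theorem \ref{thm:cells_strong_exchange} together with \cite[Theorem 1.1]{Fink_et_al}) applies verbatim and shows that every cell of $S(\mu)$ is a strong $E_6$-matroid polytope.

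For the converse, suppose $\mu$ induces a strong matroidal subdivision $S(\mu)$ of $2_{21}$. Since $\mathcal{F}$ consists exactly of the cross-polytope equations $f^{E_6}_{A,B}$, one per distance-$2$ pair $(A,B)$ and its cross-polytope facet $F=F_{A,B}\cong\Diamond_5$, it suffices to fix one such facet and show that $\mu|_F$ attains the minimum of $f^{E_6}_{A,B}$ at least twice; note that $f^{E_6}_{A,B}$ is literally the type-$D_5$ cross-polytope equation $f^{D}_{\Diamond_5}$ of \ref{eq:D_1_strong}. I would first use the standard fact that the restriction of a regular subdivision to a face is the regular subdivision induced by the restricted heights, so the cells of $S(\mu)|_F$ are faces of cells of $S(\mu)$, hence strong $E_6$-matroid polytopes contained in $F$.

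The core step is then to reproduce the argument of Proposition \ref{prop:strong_ex_cross} for such a cell, but with $E_6$-strong exchange in place of $D_5$-strong exchange. Realizing $F=\Diamond_5$ with vertices $\pm e_1,\dots,\pm e_5$, a full-dimensional cell $G$ of $S(\mu)|_F$ must use at least six of the ten vertices, so $G$ contains some antipodal pair $\{e_a,-e_a\}$. Applying the strong exchange property of the $E_6$-matroid $G$ to this pair yields an $E_6$-reflection $s$ separating $e_a$ and $-e_a$ with $s e_a, s(-e_a)\in G\subseteq F$. Because $s$ is an isometry, $\{s e_a, s(-e_a)\}$ is a pair of vertices of $F$ at maximal (long-diagonal) distance, hence an antipodal pair of $F$; and since no $E_6$-root is parallel to a long diagonal of the cross-polytope facet, $s$ cannot merely swap $e_a$ and $-e_a$. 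Thus $G$ contains a \emph{second} antipodal pair, and the linear-algebra computation at the end of the proof of Proposition \ref{prop:strong_ex_cross} shows that the two corresponding antipodal sums $\mu_{e_a}+\mu_{-e_a}$ are equal and minimal among all five. This is exactly the statement that $\mu|_F$ satisfies $f^{E_6}_{A,B}$; ranging over all $27$ facets gives $\mu\in\Dr(E_6,P_1)$.

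The main obstacle is the geometric input isolated above: that the images of $e_a,-e_a$ under the separating reflection form a \emph{genuinely new} antipodal pair. The first half (that they form some antipodal pair) is clean from the isometry/diameter argument, but the second half — that $s$ cannot swap the pair — rests on the fact that the long diagonal $A-B$ of a cross-polytope facet of $2_{21}$ is not parallel to any root of $E_6$, equivalently that distance-$2$ pairs are non-edges of $2_{21}$. I would verify this either from the explicit $E_6$-weight description of the $27$ vertices (a distance-$2$ difference is a sum of two roots that is not itself a root, and one checks it is not proportional to a root) or directly from the root data in \texttt{OSCAR}. This is precisely the feature distinguishing type $E_6$ (and type $(D_n,P_1)$) from the type-$B$ situation, where a diagonal can itself be a root and the converse consequently fails.
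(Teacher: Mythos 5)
Your proposal is correct and follows essentially the same route as the paper: the forward direction via affine invariance (Proposition \ref{prop:D_1_affine} plus Theorem \ref{thm:cells_strong_exchange}), and the converse by restricting $\mu$ to each of the $27$ cross-polytope facets of $2_{21}$, invoking the cross-polytope analysis of Proposition \ref{prop:strong_ex_cross}, and using the key geometric fact that exchange partners of a facet's antipodal pair stay inside that facet. The one organizational difference is that you apply strong exchange to the cell $G$ of the \emph{restricted} subdivision, which tacitly uses the (unstated, but true) lemma that faces of strong Coxeter matroid polytopes are again strong, whereas the paper applies strong exchange to the ambient cell of $S(\mu)$ and argues that all possible peers of $(A,B)$ already correspond to entries of $f_{\Diamond}$; the latter, as well as your ``no root parallel to a long diagonal'' obstacle, follows cleanly from minuscularity --- a separating reflection $s_\alpha$ acts by $A \mapsto A-\alpha$, $B \mapsto B+\alpha$, so it preserves $A+B$ and hence maps antipodal pairs of the facet to antipodal pairs of the same facet, and $s_\alpha A = B$ would force $A-B=\alpha$ to be a root, contradicting $d(A,B)=2$.
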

\begin{proof}
One direction follows from affine invariance. For the other, let $f_{\Diamond}$ correspond to a $D_5$ cross polytope $\Diamond$ in the $5$-skeleton of $2_{21}$, and consider the restriction of the subdivision induced by $\mu$ to $\Diamond$. If the minimum on $f_{\Diamond}$ were unique, Proposition \ref{prop:D_5,6} tells us that there is a maximal cell of the restriction with peerless antipodes $(A,B)$. Since all possible peers for $(A,B)$ in $2_{21}$ correspond to entries of $f_{\Diamond}$, $(A,B)$ form peerless antipodes of some maximal cell in the global subdivision. The result follows from the contrapositive.
\end{proof}
\subsection{Type $E_7$}\label{sec:E_7}
In type $E_7$ we only have a single minuscule parabolic $P_1$, and the polytope is commonly known as $3_{21}=\Delta(E_7/P_1)$ \cite{Cox_regular}. The polytope is $7$ dimensional, with $56$ vertices with maximum edge distance $3$ between pairs. Each vertex has $27$ neighbors. Each distance 2 pair induces a $\Diamond_6$ facet, and distance $3$ pairs are of the form $(v,-v)$. Since each element $A$ has a unique distance $3$ counterpart $B$, we just write $B=-A$. The strong exchange equations come in two forms. we have an equation for each $\Diamond_6$ facet, one supported on all pairs of antipodes of $3_{21}$
\begin{equation}\label{eq:E_7_strong}
\begin{aligned}
f^{E_7}_{A,B}
  &= x_A \odot x_B \;\bigoplus_{i=1}^5 x_{C_i}\odot x_{C'_i}
     &&\text{when } d(A,B)=2, \\[6pt]
f^{E_7}
  &= \bigoplus_{A\in W/P} x_A \odot x_{-A}.
\end{aligned}
\end{equation}
\begin{proposition}\label{prop:E_7_affine_inv}
The type $E_7$ strong exchange equations \ref{eq:E_7_strong} are affinely invariant.
\end{proposition}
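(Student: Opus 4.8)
The plan is to mimic the proofs of Propositions \ref{prop:B_n_affine} and \ref{prop:D_1_affine}: I will show that adding an affine linear function $\varphi$ shifts every monomial of a given equation by one and the same constant, so that the tropical minimum is attained at least twice before the shift if and only if it is afterwards. Since $\Dr(E_7,P_1) = \bigcap_{f} V^{\trop}(f)$, it suffices to establish this monomialwise-constant-shift property for each equation in \ref{eq:E_7_strong} separately. Writing $v_A$ for the vertex of $3_{21}$ labelled by $A$, every monomial is of the form $x_A \odot x_B$ with $(A,B)$ an antipodal pair, and its value at $\mu + \varphi|_{V(3_{21})}$ is $\mu_A + \mu_B + \bigl(\varphi(v_A) + \varphi(v_B)\bigr)$.

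The heart of the argument is the identity $\varphi(v_A) + \varphi(v_B) = 2\,\varphi\bigl(\tfrac{1}{2}(v_A + v_B)\bigr)$, valid because $\varphi$ is affine linear. This shows that the shift attached to a monomial depends only on the midpoint $\tfrac{1}{2}(v_A + v_B)$ of its antipodal pair, so I only need to verify that all pairs indexing the monomials of a single equation share a common midpoint. For a facet equation $f^{E_7}_{A,B}$ the relevant pairs $(A,B), (C_1,C'_1),\dots,(C_5,C'_5)$ are exactly the pairs of opposite vertices of the cross polytope $\Diamond_6$ that this equation records, and these are symmetric about, hence share, the center of $\Diamond_6$; this is the computation already carried out in Proposition \ref{prop:D_1_affine}. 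For the global equation $f^{E_7}$ the pairs are the distance-$3$ pairs $(A,-A)$, which are of the form $(v,-v)$ and hence centrally symmetric about the center of $3_{21}$, so they again share a single midpoint.

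Once the common midpoint $m_f$ of a given $f$ is identified, the shift is the single constant $2\,\varphi(m_f)$ added to all monomials, which affects neither which monomials are minimal nor whether the minimum is attained at least twice; affine invariance of $f$, and hence of the whole system, follows. I expect the only step requiring care to be the common-midpoint verification, but this is immediate from the antipodal structure of $3_{21}$ and its cross-polytope facets recalled above, so no genuine obstacle should arise: the proposition is essentially a packaging of the type-$C$/$D$ cross-polytope computation together with the central symmetry of $3_{21}$.
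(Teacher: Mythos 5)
Your proof is correct and follows essentially the same route as the paper: the facet equations $f^{E_7}_{A,B}$ are handled by the cross-polytope computation of Proposition \ref{prop:D_1_affine}, and the global equation $f^{E_7}$ by the central symmetry of $3_{21}$, with the shift $\mu_A+\mu_{-A}+2\varphi(0)$ constant over all monomials. Your midpoint formulation $\varphi(v_A)+\varphi(v_B)=2\varphi\bigl(\tfrac{1}{2}(v_A+v_B)\bigr)$ is in fact slightly more careful than the paper's citation, since the cross-polytope facets of $3_{21}$ are not centered at the origin, so the relevant constant there is $2\varphi(m_f)$ for the facet center $m_f$ rather than $2\varphi(0)$ --- but this changes nothing in the argument.
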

\begin{proof}
Since the equations $f^{E_7}_{A,B}$ are the exact equations \ref{eq:D_1_strong} on cross polytopes, the corresponding statement follows from Proposition \ref{prop:D_1_affine}. Consider $f^{E_7}$, and $\mu \in \T^{V(3_{21})}$. For $\varphi: \R^8\to \R$ affine linear, we have that $x_A \odot x_{-A}$ evaluated at $\mu+\varphi|_{V(3_{21})}$ yields $\mu_A+\mu_{-A}+2\varphi(0)$. Therefore, $\mu$ attains the minimum at least twice if and only if $\mu+\varphi$ does.
\end{proof}

Over the Krasner hyperfield, the strong exchange equations \eqref{eq:E_7_strong} are equivalent to the symmetric exchange axiom of Borovik, Gelfand, and White \cite{SymmetricExchangeAxiom}, which they conjectured to be equivalent to the Coxeter matroid axioms (e.g.\ the maximality property) in all minuscule types \cite[Section 6.16]{BorovikGelfandWhite:2003}. By means of an exhaustive search, we prove that this conjecture holds in $E_7$. 

\begin{theorem}\label{thm:minuscule coxeter matroids satisfy strong exchange}
All Coxeter matroids of type $(E_7,P_7)$ satisfy the strong exchange condition. 
\end{theorem}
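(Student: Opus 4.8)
The plan is to reduce the abstract strong exchange condition to the finite list of tropical equations \eqref{eq:E_7_strong} and then dispatch the cases according to the edge-distance of the two bases. By the equivalence recalled just before the theorem statement (the Krasner-hyperfield reading of \cite[Theorem 1.1]{Fink_et_al}), a collection $\mathcal{M}\subseteq W(E_7)/P_1$ is a strong Coxeter matroid exactly when its indicator $\mu_{\mathcal{M}}$ satisfies every equation in \eqref{eq:E_7_strong}; so it suffices to show this holds automatically when $\mathcal{M}$ is a Coxeter matroid. The pairs $(A,B)$ of bases fall into three $W(E_7)$-orbits according to $d(A,B)\in\{1,2,3\}$, and I would treat these in turn. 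The distance-$1$ case is free: if $A,B$ are joined by an edge of $3_{21}$ parallel to a root $\alpha$, then, because the weights are minuscule, $s_\alpha A=B$ and $s_\alpha B=A$, while the mirror of $s_\alpha$ separates $A$ and $B$; both images lie in $\mathcal{M}$ trivially.

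For distance-$2$ pairs I would localize to a facet, exactly as in the $E_6$ argument of Proposition~\ref{prop:e6_strong_ex}. Each distance-$2$ pair $(A,B)$ is the pair of poles of a unique $\Diamond_6$ facet $F$ of $3_{21}$, and $f^{E_7}_{A,B}$ is supported entirely on the $12$ vertices of $F$, coinciding with the type $D_6$ cross-polytope equation \eqref{eq:D_1_strong}. Since a face of a Coxeter matroid polytope is again a Coxeter matroid polytope \cite{BorovikGelfandWhite:2003}, and since the long diagonals of $F$ are distance-$2$ in $3_{21}$ and hence not $E_7$-roots, the edges of $F$ span a sub-root-system of type $D_6$ and $\mathcal{M}\cap F$ is a Coxeter matroid of type $(D_6,P_1)$. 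The type $D$ analysis of Section~\ref{sec:type_D} then reduces the claim for $(A,B)$ to the self-contained, small verification that every $(D_6,P_1)$ Coxeter matroid satisfies strong exchange (equivalently, that containing one antipodal pair of the cross-polytope forces a second), which is a short computation modulo $W(D_6)$.

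The genuinely new case, and the main obstacle, is the global antipodal equation $f^{E_7}=\bigoplus_{A}x_A\odot x_{-A}$. Unlike $f^{E_7}_{A,B}$, it is supported on all $28$ antipodal pairs and does not localize to any proper face; this is precisely the feature missing from $2_{21}$ (whose diameter is $2$) that forces $E_7$ to need an argument beyond the $E_6$ localization. Concretely, $\mu_{\mathcal{M}}$ satisfies $f^{E_7}$ iff $\mathcal{M}$ contains a second antipodal pair whenever it contains one, so I must prove that every $(E_7,P_1)$ Coxeter matroid containing some pair $\{A,-A\}$ contains another. I would verify this by exhaustive search. Because $-1\in W(E_7)$ and the vertex-stabilizer is $W(E_6)$, the group acts transitively on the $28$ antipodal pairs with pair-stabilizer $W(E_6)\times\langle-1\rangle$ of order $103680$; so I may fix one pair $\{A,-A\}$ and work modulo this stabilizer. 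The remaining $54$ vertices form $27$ antipodal pairs indexed by the neighbours of $A$, which carry the $E_6$-structure of $2_{21}$. Using the maximality property (the Gel'fand--Serganova characterization) as a membership oracle for ``Coxeter matroid,'' I would enumerate up to the stabilizer all Coxeter matroids containing $\{A,-A\}$ but at most one element of each non-pole pair $\{v,-v\}$, and confirm that none exist.

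The principal difficulty is the feasibility and certified completeness of this search: the naive configuration space (at most one vertex from each of the $27$ antipodal pairs) is far too large to scan blindly, so the argument depends on aggressive pruning via the stabilizer action together with the maximality test, organized so that partial configurations violating the property can be discarded early. Two supporting points must also be checked carefully: that the facets of $3_{21}$ really carry the type $D_6$ root data used in the distance-$2$ step, and that the Krasner-hyperfield equivalence of \cite[Theorem 1.1]{Fink_et_al} reduces strong exchange to exactly the equations \eqref{eq:E_7_strong}, leaving no pairs outside the three distance classes to consider.
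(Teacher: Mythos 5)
Your proposal is correct, and at bottom it rests on the same device as the paper: an exhaustive computer search. In this extended abstract the paper supplies no written proof at all for this theorem --- its entire argument is the sentence preceding the statement (``By means of an exhaustive search, we prove that this conjecture holds in $E_7$''), with the computation deposited in the accompanying repository. What you do differently is to preprocess the problem before searching: you stratify pairs of bases by edge-distance in $3_{21}$, dispose of distance-$1$ pairs via the edge reflection, and dispose of distance-$2$ pairs by localizing to the $\Diamond_6$ facet they span, using that a face of a Coxeter matroid polytope is again a Coxeter matroid polytope, that the facet's edge directions form a $D_6$ subsystem, and that a $(D_6,P_1)$ Coxeter matroid containing one antipodal pair of the cross-polytope must contain a second (a short supporting-hyperplane argument). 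This mirrors the paper's own localization technique in Proposition \ref{prop:e6_strong_ex} for $E_6$, and it correctly reduces the computation to the single global equation $f^{E_7}$: one must only rule out $(E_7,P_7)$ Coxeter matroids containing exactly one antipodal pair, searched modulo the pair stabilizer $W(E_6)\times\langle -1\rangle$ of order $103680$. That smaller, better-structured search is what your decomposition buys over a wholesale verification of strong exchange. One caution on the remaining computational step: your pruning plan (``discard partial configurations violating the property early'') cannot lean directly on the Coxeter matroid property, because it is not monotone --- a forbidden edge of a partial configuration can cease to be an edge once further vertices are added (this is exactly the $D_6$ phenomenon you exploit), and likewise maximality-property violations can be repaired by later additions. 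A certified-complete search therefore needs a genuinely hereditary pruning criterion; this is an implementation concern rather than a mathematical gap, and your outline would constitute a valid proof once the search is actually executed.
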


In combination with known results in the other types we conclude that in all types except $B_n$, all minuscule Coxeter matroids satisfy the strong exchange axiom. Moreover, Theorem \ref{thm:minuscule coxeter matroids satisfy strong exchange} implies that the relations on the proper faces of $3_{21}$ are already sufficient to guarantee a subdivision consists of strong Coxeter matroid polytopes.

\begin{theorem}\label{thm:subdivision of E7}
 A height function $\mu$ induces a strong matroidal subdivision of $3_{21}$ if and only if it is contained in $\bigcap_{(A,B)} V^{\Trop}(f^{E_7}_{A,B})$, where $(A,B)$ runs over all pairs of elements of $W(E_7)/P_7$ with $d(A,B)=2$.
\end{theorem}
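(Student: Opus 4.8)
The plan is to prove the two inclusions separately, throughout comparing against the full Coxeter Dressian $\Dr(E_7,P_1)=V^{\Trop}(\mathcal{F})$ of Definition~\ref{def:coxeter_Dressian}, where $\mathcal{F}=\mathcal{F}_2\cup\{f^{E_7}\}$ and $\mathcal{F}_2=\{f^{E_7}_{A,B}\mid d(A,B)=2\}$ is the collection of facet (distance-$2$) relations appearing in the statement. Since $V^{\Trop}(\mathcal{F})\subseteq V^{\Trop}(\mathcal{F}_2)$ trivially, Theorem~\ref{thm:strong_exchange_all_types} already yields that any $\mu\in\Dr(E_7,P_1)$ induces a strong matroidal subdivision; the real content is to show that the single global relation $f^{E_7}=\bigoplus_A x_A\odot x_{-A}$ is \emph{redundant} once $\mathcal{F}_2$ is imposed. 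Concretely, I will establish (a) that a strong matroidal subdivision forces $\mu\in V^{\Trop}(\mathcal{F}_2)$ and (b) that $\mu\in V^{\Trop}(\mathcal{F}_2)$ forces the subdivision to be strong matroidal, the latter by proving that $f^{E_7}$ is implied by $\mathcal{F}_2$.

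For \emph{necessity} (a) I would argue by contrapositive, mirroring the proof of Proposition~\ref{prop:e6_strong_ex}. Suppose $\mu$ violates some $f^{E_7}_{A,B}$ with $d(A,B)=2$. Restricting $\mu$ to the corresponding $\Diamond_6$ facet and observing that $f^{E_7}_{A,B}$ is \emph{verbatim} the type-$D$ cross-polytope relation $f^{D}_{\Diamond_6}$ of \eqref{eq:D_1_strong}, Proposition~\ref{prop:strong_ex_cross} produces a maximal cell of the restricted subdivision carrying a peerless distance-$2$ antipodal pair. Because every potential peer of a distance-$2$ pair lies in the same $\Diamond_6$ facet—its antipodal pairs exhaust the monomials of $f^{E_7}_{A,B}$—this pair stays peerless in the ambient maximal cell, which is therefore not a strong Coxeter matroid.

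For \emph{sufficiency} (b), each $f^{E_7}_{A,B}$ is affinely invariant by Proposition~\ref{prop:E_7_affine_inv}, so Theorem~\ref{thm:cells_strong_exchange} guarantees that for every cell $C$ of $S(\mu)$ the indicator $\mu_C$ satisfies all of $\mathcal{F}_2$; equivalently the vertex set $\mathcal{M}_C$ satisfies $\mathcal{F}_2$ over the Boolean semifield. By \cite[Theorem~1.1]{Fink_et_al} it remains only to upgrade this to satisfaction of $f^{E_7}$, i.e.\ to rule out a \emph{peerless} antipodal pair in $\mathcal{M}_C$. A same-midpoint functional argument shows that a distance-$2$ chord $[A,B]$ with $A+B=C_i+C_i'$ cannot be an edge of $\Delta(\mathcal{M}_C)$ once a peer is present, and $\mathcal{F}_2$ supplies exactly such peers; hence the only remaining non-root candidate edges are the distance-$3$ antipodes $[v,-v]$ (with $2v$ not a root). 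Such an edge exists precisely when $(v,-v)$ is peerless—that is, precisely when $f^{E_7}$ fails—so a cell satisfying $\mathcal{F}_2$ but failing $f^{E_7}$ would have an edge not parallel to a root and thus \emph{fail the Coxeter matroid axiom}. The exhaustive verification in Theorem~\ref{thm:minuscule coxeter matroids satisfy strong exchange}, together with \cite[Theorem~6.1.2]{BorovikGelfandWhite:2003}, shows that in $E_7$ the Coxeter matroid and strong Coxeter matroid classes coincide, and that the distance-$2$ conditions already enforce the Coxeter matroid axiom; this forbids the bad cell. Thus every $\mathcal{M}_C$ satisfies $f^{E_7}$, and reversing Theorem~\ref{thm:cells_strong_exchange} for the affinely invariant relation $f^{E_7}$ (a standard fact: $\mu$ satisfies an affinely invariant $f$ iff all cell indicators do) gives $\mu\in V^{\Trop}(f^{E_7})$, hence $\mu\in\Dr(E_7,P_1)$ and the subdivision is strong matroidal.

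The main obstacle is exactly the step forbidding a peerless distance-$3$ antipodal pair in a cell satisfying $\mathcal{F}_2$. This implication is genuinely \emph{non-local}: unlike the $E_6$ situation of Proposition~\ref{prop:e6_strong_ex}, the candidate peers of a distance-$3$ pair $(v,-v)$ are themselves distance-$3$ and never occur as monomials of any facet relation, so no restriction to a single $\Diamond_6$ can detect them. Consequently there is no facet-by-facet argument, and the implication must be extracted from the global combinatorics of $3_{21}$—precisely the gap closed by the exhaustive $E_7$ search underlying Theorem~\ref{thm:minuscule coxeter matroids satisfy strong exchange}.
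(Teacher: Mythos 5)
Your necessity argument, and the first half of your sufficiency argument (affine invariance, Theorem \ref{thm:cells_strong_exchange}, the midpoint argument killing distance-$2$ edges), follow the route the paper intends. The fatal step is your last one: the claimed ``standard fact'' that $\mu$ satisfies an affinely invariant tropical polynomial if and only if all cell indicators do. Theorem \ref{thm:cells_strong_exchange} gives only the forward implication, and the converse is false in general. A small counterexample: let $E=\{v_1,\dots,v_6\}$ be the vertices of a regular hexagon with $v_{i+3}=-v_i$, let $f=x_1\odot x_4\oplus x_2\odot x_5\oplus x_3\odot x_6$ (affinely invariant, since all three long diagonals have midpoint $0$), and take $\mu=(0,1,0,2,0,3)$. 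The induced subdivision has maximal cells $v_1v_3v_5$, $v_1v_2v_3$, $v_3v_4v_5$, $v_5v_6v_1$; no cell contains a diagonal pair, so every cell indicator satisfies $f$ vacuously, yet $f(\mu)=\min\{2,1,3\}$ is attained only once. Worse, if your step were valid, your argument would establish $\bigcap_{(A,B)}V^{\Trop}(f^{E_7}_{A,B})\subseteq V^{\Trop}(f^{E_7})$, i.e.\ it would prove Conjecture \ref{conj:E7}, which the paper explicitly declares open in the sentence immediately after the theorem. The step is also unnecessary: the theorem asserts that the subdivision is strong matroidal, not that $\mu\in\Dr(E_7,P_7)$. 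Once every cell indicator satisfies all strong exchange equations, \cite[Theorem 1.1]{Fink_et_al} already makes every cell a strong Coxeter matroid polytope, and the proof must stop there. (The converse you invoke does hold for cross polytopes---that is exactly Proposition \ref{prop:strong_ex_cross}---but only because a full-dimensional cell of $\Diamond_n$ is forced by pigeonhole to contain an antipodal pair; nothing of the sort holds for $3_{21}$, with $56$ vertices in dimension $7$, which is precisely why Conjecture \ref{conj:E7} is nontrivial.)

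There is a second, subtler gap in the step forbidding the ``bad cell''. Theorem \ref{thm:minuscule coxeter matroids satisfy strong exchange} is a statement about Coxeter matroids, i.e.\ about subsets all of whose polytope edges are root-parallel. The hypothetical bad cell---vertex set satisfying the distance-$2$ relations, containing exactly one antipodal pair $(X,-X)$, with $[X,-X]$ an edge of the cell---is precisely \emph{not} a Coxeter matroid, so that theorem does not apply to it. What it does give you: if $[X,-X]$ is \emph{not} an edge, then by your midpoint argument all edges are root-parallel, the cell is a Coxeter matroid, hence strong by Theorem \ref{thm:minuscule coxeter matroids satisfy strong exchange}, contradicting peerlessness. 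Ruling out the remaining case, where the main diagonal \emph{is} an edge, is exactly what your phrase ``the distance-$2$ conditions already enforce the Coxeter matroid axiom'' asserts, but the cited theorem, as stated, does not supply it; one needs the stronger (Boolean-level) fact that every subset satisfying the distance-$2$ relations has no main-diagonal edge. The paper's own one-sentence justification shares this compression, so your reconstruction is faithful to its skeleton; but as written your proof both leaves this point unproven and then overshoots into the open conjecture.
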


However, it is still open whether all $\mu$ satisfying the equivalent conditions of Theorem \ref{thm:subdivision of E7} are in the Dressian.

\begin{conjecture}\label{conj:E7}
 The support of $\Dr(E_7,P_7)$ is equal to the support of $\bigcap V^{\Trop}(f^{E_7}_{A,B})$.
\end{conjecture}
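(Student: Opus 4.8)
The plan is to reduce the statement to a purely combinatorial, Boolean-level question about the maximal cells of $S(\mu)$, and then invoke the exhaustive verification behind Theorem \ref{thm:minuscule coxeter matroids satisfy strong exchange}. Write $\{f^{E_7}_{A,B}\}$ for the facet relations indexed by distance-$2$ pairs. By Proposition \ref{prop:E_7_affine_inv} this family is affinely invariant, so Theorem \ref{thm:cells_strong_exchange} applies: if $\mu\in\bigcap_{(A,B)}V^{\Trop}(f^{E_7}_{A,B})$, then for every cell $C\in S(\mu)$ the indicator $\mu_C$ again satisfies every $f^{E_7}_{A,B}$. Hence it suffices to prove the equivalence for indicator vectors, i.e.\ to show that the vertex set $\mathcal{M}\subseteq W(E_7)/P_7$ of a \emph{maximal} cell of $S(\mu)$ is a strong Coxeter matroid if and only if $\mu_\mathcal{M}$ satisfies all facet relations. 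The restriction to maximal cells is essential: a two-element antipodal set $\{A,-A\}$ satisfies all facet relations vacuously (no $\Diamond_6$ facet contains a distance-$3$ pair) yet is not matroidal, so full-dimensionality of the cell must be used.

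For the forward direction, suppose $\mu$ induces a strong matroidal subdivision. Fix a distance-$2$ pair $(A,B)$ and its $\Diamond_6$ facet $\Diamond$. The restriction $S(\mu)|_\Diamond$ is the regular subdivision induced by $\mu|_\Diamond$, and each of its cells is a face of a cell of $S(\mu)$; since faces of strong Coxeter matroid polytopes are again strong Coxeter matroid polytopes, $S(\mu)|_\Diamond$ is strong matroidal. By Proposition \ref{prop:strong_ex_cross} (for $n=6$) this gives $\mu|_\Diamond\in\Dr(D_6,P_1)$, i.e.\ $\mu$ satisfies $f^{E_7}_{A,B}=f^D_{\Diamond}$. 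As $(A,B)$ was arbitrary, $\mu\in\bigcap_{(A,B)}V^{\Trop}(f^{E_7}_{A,B})$.

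For the converse I would argue at the level of a fixed maximal cell $C$ with indicator $\mu_C=\mu_\mathcal{M}$. Running the forward argument in reverse on each facet (using the other implication of Proposition \ref{prop:strong_ex_cross}) shows $S(\mu)|_\Diamond$ is strong matroidal for every $\Diamond_6$ facet $\Diamond$; equivalently, $\Delta(\mathcal{M})$ has no edge that is a diagonal of some $\Diamond_6$, i.e.\ no edge joining a distance-$2$ pair. The only remaining obstruction to $\mathcal{M}$ being a Coxeter matroid is a non-root edge joining a distance-$3$ (antipodal) pair $(A,-A)$ of $3_{21}$. I would rule this out by showing that no \emph{full-dimensional} cell can carry such an edge while meeting every $\Diamond_6$ facet in a matroidal subdivision: the midpoint of $[A,-A]$ is the center $0$ of $3_{21}$, and a full-dimensional $C$ with $0$ on an edge, together with matroidality on every $\Diamond_6$ facet through $A$ and through $-A$, forces a $\Diamond_6$-diagonal among the vertices of $C$, contradicting the previous step. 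Once all edges of $\Delta(\mathcal{M})$ are root-parallel, $\mathcal{M}$ is a Coxeter matroid, hence strong by Theorem \ref{thm:minuscule coxeter matroids satisfy strong exchange}, and therefore satisfies all of \eqref{eq:E_7_strong} by \cite[Theorem 1.1]{Fink_et_al}; in particular the subdivision is strong matroidal.

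The main obstacle is exactly this last redundancy claim: that the global antipodal relation $f^{E_7}$ imposes no new constraint on full-dimensional cells beyond the facet relations. The general, non-full-dimensional statement is false, so the argument must genuinely exploit full-dimensionality of maximal cells, either through the center-of-symmetry argument sketched above or, absent a clean synthetic proof, through the same exhaustive enumeration of type-$(E_7,P_7)$ configurations that underlies Theorem \ref{thm:minuscule coxeter matroids satisfy strong exchange}.
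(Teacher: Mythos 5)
The statement you are trying to prove is left \emph{open} in the paper: it is stated as Conjecture \ref{conj:E7}, immediately after the sentence ``it is still open whether all $\mu$ satisfying the equivalent conditions of Theorem \ref{thm:subdivision of E7} are in the Dressian.'' So there is no paper proof to compare against, and your proposal does not close the gap either. What your argument actually sketches is Theorem \ref{thm:subdivision of E7}: the equivalence between $\mu$ satisfying the facet relations $f^{E_7}_{A,B}$ and $\mu$ inducing a strong matroidal subdivision of $3_{21}$. That is not the conjecture. The conjecture asks whether every $\mu$ satisfying the facet relations also satisfies the \emph{global} antipodal relation $f^{E_7}=\bigoplus_A x_A\odot x_{-A}$, since $\Dr(E_7,P_7)$ is cut out by the facet relations together with $f^{E_7}$. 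This is a condition on the real values $\mu_A+\mu_{-A}$ (the minimum over all $28$ antipodal pairs must be attained twice), not a condition on the cells of $S(\mu)$.

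Your reduction ``it suffices to prove the equivalence for indicator vectors'' is exactly the step that fails. Theorem \ref{thm:cells_strong_exchange} is a one-way implication, from $\mu$ to its cells; the converse --- all cells satisfy the equations, hence $\mu$ does --- is false, and the paper exhibits the failure in type $B_3$: in Example \ref{ex:strong_fails_fink} the height function $(0,0,0,-1,2,1,1,0)$ induces a subdivision all of whose cells are strong Coxeter matroid polytopes (so every indicator vector satisfies every equation), yet $\mu$ itself violates $f^{B}_{\emptyset,[3]}$, the one equation supported on all antipodal pairs of the cube and on no proper face. The relation $f^{E_7}$ has precisely the same structure relative to $3_{21}$: it pairs antipodes and is supported on no proper face, so the same failure mode is possible a priori, and ruling it out in $E_7$ is the entire content of the conjecture. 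For the same reason, your fallback of invoking the exhaustive enumeration behind Theorem \ref{thm:minuscule coxeter matroids satisfy strong exchange} cannot work: that computation concerns $\{0,\infty\}$-valued vectors (Coxeter matroids) only, and Boolean-level redundancy of $f^{E_7}$ does not lift to real-valued height functions --- again, type $B_3$ is the counterexample, where the Boolean equation does hold on every strong matroid but the tropical equation fails on a real $\mu$ inducing a strong matroidal subdivision. Any genuine proof of the conjecture must produce, from the facet relations alone, a second antipodal pair achieving the minimum of $\mu_A+\mu_{-A}$; nothing in your proposal, or in the paper, does this.
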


\bibliographystyle{amsalpha}
\bibliography{biblio}{}

\end{document}